\newcommand{\jump}[1]{\ensuremath{[\![#1]\!]} }
\newtheorem{Theorem}{Theorem}[section]
\newtheorem{Proposition}[Theorem]{Proposition}
\newtheorem{Lemma}[Theorem]{Lemma}
\newtheorem{Corollary}[Theorem]{Corollary}
\newtheorem{Remark}[Theorem]{Remark}
\newtheorem{Example}[Theorem]{Example}
\newtheorem{Definition}[Theorem]{Definition}
\def\hugesymbol#1{\mbox{\strut\rlap{\smash{\Huge$#1$}}\quad}}
\title{A function determined by a hypersurface of positive characteristic}
\author[Kosuke Ohta]{Kosuke Ohta (Meiji University)}
\subjclass{13F25}
\keywords{F-pure threshold; F-signature; F-signature of pair; Hilbert-Kunz function; hypersurface; Hilbert-Kunz function}
\begin{document}

\maketitle

\begin{abstract}
Let 
$R=k\jump{X_1, \dots ,X_{n+1}}$
be a formal power series ring over a perfect field $k$ of prime characteristic $p>0$, and let 
$\mathfrak{m} = (X_1 , \dots , X_{n+1})$ be the maximal ideal of $R$.
Suppose 
$0\neq  f \in\mathfrak{m}$. 
In this paper,
we introduce a function $\xi_{f}(x)$ associated with a hypersurface defined on the closed interval
$[0,1]$ in $\mathbb{R}$.
The Hilbert-Kunz function and the F-signature of a hypersurface appear as
the values of our function $\xi_{f}(x)$ on the interval's endpoints.
The F-signature of the pair, denoted by $s(R,f^{t})$, was defined in \cite{BST13}.
Our function $\xi_{f}(x)$ is integrable, and 
the integral
$\displaystyle\int_{t}^{1}\xi_{f}(x)dx$
is just $s(R,f^{t})$ for any $t\in[0,1]$. 
\end{abstract}

\section{Introduction}\label{Introduction}

In this paper,
we introduce a function $\xi_{f}(x)$ associated with a hypersurface defined on the closed interval
$[0,1]$ in $\mathbb{R}$.
The Hilbert-Kunz function and the F-signature of a hypersurface appear as
the values of our function $\xi_{f}(x)$ on the interval's endpoints.

\begin{Definition}
\begin{rm}
Let $(R,\mathfrak{n},k)$ be a $d$-dimensional Noetherian local ring of prime characteristic $p>0$. 
The $Hilbert$-$Kunz$ $multiplicitiy$ of $R$ is 
\[
   e_{HK}(R) = 
   \displaystyle\lim_{e\to\infty}
   \frac{\ell(R/\mathfrak{n}^{[p^e]})}{p^{ed}},
\]
where $\ell(R/\mathfrak{n}^{[p^e]})$ 
is the length of $R/\mathfrak{n}^{[p^e]}$, 
and $\mathfrak{n}^{[p^e]}$ 
is the ideal generated by all the $p^e$-th powers of elements of
$\mathfrak{n}$. 
Monsky showed that this limit always esists 
(\cite{Mo83}). 
\end{rm}
\end{Definition}

Let $(R,\mathfrak{n},k)$ be a $d$-dimensional Noetherian local ring of prime characteristic $p>0$, 
and let $F:R\rightarrow R$ be the Frobenius map,
that is 
$F(x)=x^{p}$ for any $x \in R$.
Taking a positive integer $e>0$, 
we obtain the ring $R$,
denoted by $F^e_*R$,
viewed as an $R$-module via the $e$-th Frobenius map.
The element $s$ in $F^{e}_{*}R$ is sometimes denoted by $F^{e}_{*}(s)$.
For $a,e \in R$, we have $a\cdot F^{e}_{*}(s)=F^{e}_{*}(a^{p^{e}}s)$.
We define the F-signature as follows.

\begin{Definition}
\begin{rm}
Let $(R,\mathfrak{n},k)$ be a $d$-dimensional complete Cohen-Macaulay local ring of prime characteristic $p>0$. 
Assume that $R$ is an F-finite ring, namely, 
the Frobenius map $F:R \rightarrow R$ is finite. 
Suppose
$F^e_*R \simeq R^{\oplus a_e} \oplus M_e$
with some integer $a_{e}$ and some $R$-module $M_{e}$, 
where $M_e$ has no free direct summands. 
The number $a_e$ is called $e$-$th$ $Frobenius$ $splitting$ $number$ of $R$.
Then, 
\[
   s(R) = \displaystyle\lim_{e\to\infty}\frac{a_e}{p^{ed}}
\]
is called the $F$-$signature$ of $R$. 
Tucker showed this limit always exists 
(\cite{T12}, Theorem $4.9$).
\end{rm}
\end{Definition}


In the rest of this paper,
let $n\ge 1$.  
 Let 
 $R=k\jump{X_1, \dots ,X_{n+1}}$ 
 be a formal power series ring over a perfect field $k$ of prime characteristic $p>0$, and let 
 $\mathfrak{m} = (X_1 , \dots , X_{n+1})$ be the maximal ideal of $R$.
 Suppose 
$0\neq  f \in\mathfrak{m}$. 
Rings of the form $R/(f)$ are called ``$n$-$dimensional$ $hypersurface$''.

\begin{Definition}
\begin{rm}
We define
\[
 M_{e,\,t} = \displaystyle\frac{(f^t) + \mathfrak{m}^{[p^e]}}{(f^{t+1}) +  
  \mathfrak{m}^{[p^e]}}
  \simeq \displaystyle\frac{R}{[(f^{t+1})+\mathfrak{m}^{[p^e]}]:f^t} ,
\]
where 
$e \ge 0$ and $t \ge 0$ 
are integers. 
\end{rm}
\end{Definition} 

Then we have, for any $t \ge 0$, the surjection
\[
M_{e,\, t} \rightarrow M_{e,\, t+1}
\]
because 
$[(f^{t+1})+\mathfrak{m}^{[p^e]}]:f^t
\subset
[(f^{t+2})+\mathfrak{m}^{[p^e]}]:f^{t+1}
\subset
R$. 

Let $\overline{R}=R/\mathfrak{m}^{[p^e]}$. 
Then, remark that
$M_{e,\, t}= f^t\overline{R} / f^{t+1}\overline{R}$. 

\begin{Definition}
\begin{rm}
We define 
\[
   C_{e,\,t} = \displaystyle\frac{\ell_R(M_{e,\,t})}{p^{en}} ,
\]
where 
$\ell_R(M_{e,\,t})$ 
is the length as an $R$-module.
\end{rm}
\end{Definition} 
Then we have
\begin{equation}\label{1}
p^e \ge C_{e,\,0} \ge C_{e,\,1} \ge C_{e,\,2} \ge \cdots \ge C_{e,\,p^e-1} \ge C_{e,\,p^e}
=C_{e,\,p^e+1}=\cdots =0.
\end{equation}
A sequence of functions $\{\xi_{f,e}: [0,1]\to {\mathbb R} \}_{e\ge 0}$ is defined by
\[
  \xi_{f,e}(x) = 
  \begin{cases}
     C_{e,\,\lfloor xp^e\rfloor} 
          & (0\le x<1)\\
    C_{e,\,p^e-1} 
          & (x=1)
  \end{cases}, 
\]
where 
$\lfloor xp^e \rfloor = \max{\{ a \in {\mathbb Z} | xp^e \ge a \}}$ 
is the floor function. 
By the definition, we have 
$\displaystyle\int_0^1 \xi_{f,e}(x) dx= 1$ because

\begin{eqnarray*}
     \displaystyle\int_0^1 \xi_{f,e}(x) dx 
     &=& \frac{1}{p^e} 
     \big( C_{e,\,0} + C_{e,\,1} + C_{e,\,2} + \cdots + C_{e,\,p^e-1} \big) \\ 
     &=& \frac{1}{p^e}\times \frac{1}{p^{en}}
     \big( \ell_R(M_{e,\,0}) 
     +  \ell_R(M_{e,\,1}) + \cdots +  \ell_R(M_{e,\,p^e-1}) \big) \\
    &=& \frac{1}{p^{e(n+1)}} \ell_R( R/{\mathfrak m}^{[p^{e}]}) \\
    &=&  \frac{1}{p^{e(n+1)}}\times p^{e(n+1)} \\
    &=& 1.
\end{eqnarray*}

\begin{Definition}
\begin{rm}
We define the function 
$\xi_{f}(x)$ 
by
\[
\xi_{f}(x) = 
  \limsup_{e \to \infty}\xi_{f,e}(x)
\]
for $x \in [0,1]$. 
\end{rm}
\end{Definition}

By the inequalities $\eqref{1}$, $\xi_{f}(x)$ 
is decreasing on $[0,1]$. 
If $\displaystyle\lim_{e \to \infty}\xi_{f,e}(\alpha)$ exists, 
then $\xi_f(\alpha)= \displaystyle\lim_{e \to \infty}\xi_{f,e}(\alpha)$.
The sequence 
$\{C_{e,\,0}\}_e$ 
is increasing by Lemma $\ref{3}$ in section $\ref{main theorem}$. 
\[
\displaystyle\lim_{e \to \infty}C_{e,\,0}
=
\lim_{e \to \infty}\frac{\ell_R(M_{e,\,0})}{p^{en}}
=
\lim_{e \to \infty}\frac{\ell_R(R/(f)+{\mathfrak m}^{[p^e]})}{p^{en}}.
\]
This limit exists and is called the Hilbert-Kunz multiplicity of $R/(f)$, 
denoted by $e_{HK}(R/(f))$. 
Therefore, by $\eqref{1}$,
$\displaystyle\limsup_{e \to \infty}\xi_{f,e}(\alpha)$
is not $+\infty$ for any $\alpha \in [0,1]$.
We shall give an example that
$\displaystyle\lim_{e \to \infty}\xi_{f,e}(\alpha)$
dose not exist for some $f \in R$ and $\alpha \in [0,1]$ in section $\ref{Example}$.
We have
\[
\xi_f(0)
=
e_{HK}(R/(f)).
\]
Therefore, $\xi_{f}(x)$ is a bounded and decreasing function on $[0,1]$. 
In particular, 
$\xi_{f}(x)$ is integrable, 
and has at most countably many points of discontinuity on $[0,1]$.


The main theorem of this paper is the following:

\begin{Theorem}\label{th1}
\begin{rm}
\begin{enumerate}[$1)$]

\item
The function $\xi_{f}(x)$ is decreasing.
There exists a countable subset $C$ of the interval $[0,1]$ 
such that $\xi_{f}(x)$ is continuous at any $\alpha \in [0,1]-C$.
Moreover, $\xi_{f}(x)$ is continuous at $0$ and $1$.

\item
If $\xi_{f}(x)$ is continuous at $\alpha \in [0,1]$, 
then $\displaystyle\lim_{e \to \infty} \xi_{f,e}(\alpha) = \xi_{f}(\alpha)$.

\item
We have $\xi_{f}(0)= e_{HK}(R/(f))$,
and also $\xi_f(1) = s(R/(f))$.

\item 
Suppose that $\xi_{f}(1)=0$, then
${\rm fpt}(f)=\inf\{ \alpha \in [0,1]\;|\; \xi_f(\alpha)=0 \}$
holds, where 
${\rm fpt}(f)=\displaystyle\lim_{e\to \infty}\frac{\mu_f(p^e)}{p^e}$ 
is the F-pure threshold of $f$, 
where $\mu_f(p^e)=\min\{ t\ge 1 \:|\; f^t \in \mathfrak{m}^{[p^e]}\}$.

\item
The function $\xi_{f}(x)$ is integrable,
and we have 
$\displaystyle\int_{\frac{a}{p^e}}^{\frac{a+1}{p^e}} \xi_f(x)dx =
\displaystyle\frac{\ell_{R}(M_{e,\,a})}{p^{e(n+1)}}$
for integers $0 \le a < p^e$.
In particular, 
$\displaystyle\int_{0}^{1} \xi_f(x)dx =1$ holds.

\item
If $R/(f)$ is normal then $\xi'_{f}(0)=0$, where $\xi'_{f}$ is the derivative of $\xi_{f}$.
\end{enumerate}
\end{rm}
\end{Theorem}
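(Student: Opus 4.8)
\emph{Setup of the reduction.} I read $\xi_f'(0)$ as the right-hand derivative $\lim_{x\to0^+}(\xi_f(x)-\xi_f(0))/x$, and the plan is to reduce its vanishing to a second-order estimate on the primitive of $\xi_f$. Set $h(x)=e_{HK}(R/(f))-\xi_f(x)$; by part $3)$ one has $\xi_f(0)=e_{HK}(R/(f))$, and by part $1)$ the function $h$ is nonnegative, increasing, with $h(0)=0$. For such an $h$ the elementary bounds $\tfrac{x}{2}h(\tfrac{x}{2})\le\int_0^x h(u)\,du\le x\,h(x)$ make the two conditions $\int_0^x h(u)\,du=o(x^2)$ and $h(x)=o(x)$ equivalent, and $h(x)=o(x)$ is exactly $\xi_f'(0)=0$. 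Hence it suffices to show
\[
H(x):=e_{HK}(R/(f))\,x-\int_0^x\xi_f(u)\,du=o(x^2)\qquad(x\to0^+).
\]

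\emph{Turning $H$ into a Hilbert--Kunz defect.} I would then use the exact length identity of part $5)$. Summing it over $0\le a<t$ gives, for integers $0\le t\le p^e$,
\[
\int_0^{t/p^e}\xi_f(u)\,du=\frac{1}{p^{e(n+1)}}\,\ell_R\!\big(R/((f^t)+\mathfrak m^{[p^e]})\big)=\frac{1}{p^{e(n+1)}}\sum_{i=0}^{t-1}\ell_R(M_{e,i}),
\]
and since $\int_0^x\xi_f$ is continuous, evaluating at $t_e=\lfloor xp^e\rfloor$ recovers $\int_0^x\xi_f$ as $e\to\infty$. The filtration of $R/(f^t)$ with quotients $(f^i)/(f^{i+1})\cong R/(f)$ gives $e_{HK}(R/(f^t))=t\,e_{HK}(R/(f))$, so
\[
H(x)=\lim_{e\to\infty}\frac{1}{p^{e(n+1)}}\sum_{i=0}^{t_e-1}\big(\ell_R(M_{e,0})-\ell_R(M_{e,i})\big).
\]
Here normality is used once in a harmless way: as $R/(f)$ is normal, the Huneke--McDermott--Monsky expansion gives $\ell_R(M_{e,0})=e_{HK}(R/(f))p^{en}+O(p^{e(n-1)})$, and this $O(p^{e(n-1)})$ correction is weighted by $t_e/p^{e(n+1)}$ and vanishes in the limit, so only the \emph{defects} $\ell_R(M_{e,0})-\ell_R(M_{e,i})$ survive.

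\emph{Where normality does the real work.} Writing $\overline{R}=R/\mathfrak m^{[p^e]}$, the surjection $M_{e,0}\to M_{e,i}$ is multiplication by $f^i$, and its kernel has length $\ell_R(M_{e,0})-\ell_R(M_{e,i})$ equal to the length of the image of the $f^i$-torsion $(0:_{\overline{R}}f^i)$ in $\overline{R}/f\overline{R}$; equivalently the defects are governed by $\mathrm{Tor}_1^R(R/(f),\overline{R})=(0:_{\overline{R}}f)$ and its iterates. This torsion is supported on the locus where $f$ fails to be a regular parameter, i.e.\ the singular locus of $R/(f)$. Now $R/(f)$ normal $\iff R_1$, so this locus has codimension $\ge 2$: at every height-one prime $\mathfrak q$ of $R/(f)$ the local ring is a DVR and $f$ is a regular parameter, whence the maps $M_{e,i}\xrightarrow{f}M_{e,i+1}$ localize to isomorphisms and the defects disappear there. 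The codimension-one part of the singular locus is exactly what would create a nonzero initial slope of $h$, and its absence under $R_1$ is what forces that slope to be $0$. Concretely I would aim for the uniform estimate
\[
\ell_R(M_{e,0})-\ell_R(M_{e,i})=O\!\big(i^2\,p^{e(n-2)}\big),
\]
with constant independent of $e$ and $i$ (for $0\le i\le p^e$), reflecting a contribution concentrated on the codimension-$\ge2$ locus together with the linear-in-$i$ growth of the torsion. Granting it, $\sum_{i<t_e}\big(\ell_R(M_{e,0})-\ell_R(M_{e,i})\big)=O(t_e^3\,p^{e(n-2)})$, so $H(x)=\lim_e O(t_e^3 p^{e(n-2)})/p^{e(n+1)}=O(x^3)=o(x^2)$, which completes the reduction.

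\emph{The main obstacle.} The difficulty is precisely this uniform bound. Each $M_{e,i}$ has finite length and is supported only at $\mathfrak m$, so ``concentrated on the codimension-$\ge2$ locus'' cannot be invoked literally; it must be made rigorous and, crucially, uniform in the two independent parameters $i$ and $e$, with $i$ allowed to grow proportionally to $p^e$. The natural route is to pass to the associated graded module $\bigoplus_i M_{e,i}=\mathrm{gr}_f(\overline{R})$, control its $f$-torsion by spreading out to a coherent sheaf on the singular locus of $R/(f)$, and then apply a Hilbert--Kunz-type length bound on that lower-dimensional locus; equivalently, one realizes $\xi_f$ near $0$ as a Hilbert--Kunz density function and reads off that the codimension-one stratum, which alone governs the derivative at $0$, is empty. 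Securing this estimate uniformly in the regime $i\sim x\,p^e$ is where the genuine work lies, whereas the reduction of the first two paragraphs and the vanishing of the Huneke--McDermott--Monsky correction are comparatively formal.
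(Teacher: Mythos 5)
Your proposal addresses only part $6)$, and it has a genuine gap: the entire argument funnels into the uniform estimate $\ell_R(M_{e,\,0})-\ell_R(M_{e,\,i})=O(i^2p^{e(n-2)})$ for $0\le i\lesssim xp^e$, which you do not prove and explicitly flag as ``where the genuine work lies.'' The heuristic you offer for it (torsion of $\mathrm{gr}_f(\overline R)$ ``supported on the singular locus,'' which has codimension $\ge 2$ by normality) cannot be used as stated, since every $M_{e,\,i}$ is $\mathfrak m$-primary; making it rigorous and uniform in the two parameters $e$ and $i$ is a substantial open step, not a routine verification. So as written this is a proof outline with its central lemma missing. A secondary issue is that routing the problem through $\int_0^x\xi_f$ forces you to control \emph{all} the defects up to $i\sim xp^e$, which is far more than the statement $\xi_f'(0)=0$ actually requires.

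The paper's proof avoids this entirely and is worth comparing. Setting $\lambda_i(e)=e_{HK}(R/(f))p^{en}-\ell_R(M_{e,\,i})$, it needs only the single quantity $\lambda_1(s)$: since $\{C_{s+i,\,p^i}\}_i$ is increasing, $\xi_f(1/p^s)\ge C_{s,\,1}$, so the difference quotient at $x=1/p^s$ is at most $\lambda_1(s)/p^{s(n-1)}$. The averaging identity $p\,C_{e,\,t}=\sum_{j=0}^{p-1}C_{e+1,\,pt+j}$ together with $0\le\lambda_0\le\lambda_1\le\cdots\le\lambda_{p-1}$ gives $\lambda_1(s)\le p^{n+1}\lambda_0(s-1)$, and the Huneke--McDermott--Monsky expansion \emph{combined with Kurano's vanishing of the second Hilbert--Kunz coefficient for the Gorenstein (hence for this normal hypersurface) case} gives $\lambda_0(e)=O(p^{e(n-2)})$ -- note that HMM alone only yields $O(p^{e(n-1)})$, which would make the difference quotient bounded but not null; this Gorenstein input is absent from your write-up except in a weakened form. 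Monotonicity of $\xi_f$ then upgrades the estimate at the points $1/p^s$ to the full limit $x\to 0^+$. In short: the paper replaces your conjectural uniform bound on all defects by a one-step transfer of the known asymptotics of $\ell_R(M_{e,\,0})$ to $\ell_R(M_{e,\,1})$ via the Frobenius averaging relation. If you want to salvage your route, you would at minimum need to iterate that averaging relation to control $\lambda_i(e)$ for $i$ up to $xp^e$, at which point you have essentially reconstructed the paper's argument with extra overhead.
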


\begin{Remark}\label{thRem}
\begin{rm}
By Theorem $1.1$ and Proposition $3.2$ (i) in \cite{BMS09},
we know that above ${\rm fpt}(f)$ is a positive rational number.
Note that F-pure thresholds are defined as the smallest of the F-jumping exponents in \cite{BMS09}.
\end{rm}
\end{Remark}

\begin{Remark}
\begin{rm}
We define the function $\varphi_f(x)$ on $[0,1]$ as follows; 
\[
    \varphi_f(x)= \displaystyle\int_{0}^{x}\xi_{f}(t)dt.
\]
Actually, we have
\[
    \varphi_f(x)= \displaystyle\lim_{e\to \infty}
    \frac{1}{p^{e}}
    \left(  C_{e,\,0} + C_{e,\,1} + \cdots + C_{e,\, \lfloor xp^{e} \rfloor -1} \right).
\]
Since $\xi_f(x)$ is bounded and integrable on $[0,1]$, 
$\varphi_f(x)$ is Lipschitz continuous on $[0,1]$. 
In particular, $\varphi_f(x)$ is continuous on $[0,1]$.
We can rewrite $3)$ and $4)$ in Theorem $\ref{th1}$ as follows;
\begin{enumerate}
\item[3')]
The function $\varphi_f(x)$ is differentiable at $x=0$ and $1$, and 
$\varphi'_f(0) = e_{HK}(R/(f))$ and $\varphi'_f(1) = s(R/(f))$.
\item [4')]
Suppose that $s(R/(f))=0$, then
\[
{\rm fpt}(f)
=
\inf\{ \alpha \in [0,1] \;|\; \varphi_f(\alpha)=1 \}
\]
holds.
\end{enumerate}
Let $(A,{\mathfrak n})$ be an F-finite regular local ring,
and let $g\in A$ be a non-zero element. 
In \cite{BST13}, 
the F-signature of the pair $(A,g^{t})$ for any real number $t \in [0,1]$
is denoted as
\[
s(A,g^{t}) 
= \displaystyle\lim_{e\to\infty}
\frac{1}{p^{e(n+1)}}\ell_{A}\left(\frac{A}{\mathfrak{n}^{[p^e]} : g^{\lceil t(p^{e}-1) \rceil}}\right).
\]
Using $5)$ in Theorem $\ref{th1}$,
we know
\[
1-\varphi_f(x)
=
\displaystyle\int_{t}^{1}\xi_{f}(x)dx
=
s(R,f^{t})
\]
for $t \in [0,1]$.
Moreover,
if we know that $\xi_{f}(x)$ is continuous at $0$ and $1$ (see Theorem $\ref{th1}$ 1)),
we obtain $3)$ in Theorem $\ref{th1}$ immediately from Theorem $4.4$ in \cite{BST13}.
\end{rm}
\end{Remark}

In section $\ref{main theorem}$, we shall prove Theorem \ref{th1}.
The following corollary immediately follows from Theorem \ref{th1} 3) and 5)

\begin{Corollary}\label{1.8}
\begin{rm}
$e_{HK}(R/(f))\times {\rm fpt}(f) \ge 1$.
\end{rm}
\end{Corollary}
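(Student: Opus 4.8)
The plan is to read the Corollary off from two features of $\xi_f$ recorded in Theorem \ref{th1}: that $\xi_f$ is a nonnegative decreasing function on $[0,1]$ with $\xi_f(0)=e_{HK}(R/(f))$ (parts $1)$ and $3)$), and that $\int_0^1\xi_f(x)\,dx=1$ (part $5)$). The crude bound $\xi_f(x)\le\xi_f(0)$ for all $x$, together with the fact that $\xi_f$ vanishes to the right of ${\rm fpt}(f)$, will trap the integral below by $e_{HK}(R/(f))\cdot{\rm fpt}(f)$.

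Concretely, I would first check ${\rm fpt}(f)\le 1$, so that $[0,{\rm fpt}(f)]\subseteq[0,1]$: writing $f=\sum_{i=1}^{n+1}X_ih_i$ and using additivity of the Frobenius gives $f^{p^e}=\sum_i X_i^{p^e}h_i^{p^e}\in\mathfrak m^{[p^e]}$, hence $\mu_f(p^e)\le p^e$ and ${\rm fpt}(f)=\lim_e\mu_f(p^e)/p^e\le 1$. Next I would show $\xi_f\equiv 0$ on $({\rm fpt}(f),1]$: for $x>{\rm fpt}(f)$, since $\mu_f(p^e)/p^e\to{\rm fpt}(f)<x$ we have $\lfloor xp^e\rfloor\ge\mu_f(p^e)$ for all $e\gg0$; then $f^{\lfloor xp^e\rfloor}\in\mathfrak m^{[p^e]}$, so $M_{e,\lfloor xp^e\rfloor}=f^{\lfloor xp^e\rfloor}\overline R/f^{\lfloor xp^e\rfloor+1}\overline R=0$, whence $\xi_{f,e}(x)=C_{e,\lfloor xp^e\rfloor}=0$ and $\xi_f(x)=\limsup_e\xi_{f,e}(x)=0$; the single endpoint $x=1$ is then covered by monotonicity of $\xi_f$. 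Finally I would split
\[
1=\int_0^1\xi_f(x)\,dx=\int_0^{{\rm fpt}(f)}\xi_f(x)\,dx+\int_{{\rm fpt}(f)}^1\xi_f(x)\,dx ,
\]
observe that the second summand is $0$ by the previous step and that the first is at most $\xi_f(0)\cdot{\rm fpt}(f)=e_{HK}(R/(f))\cdot{\rm fpt}(f)$ since $0\le\xi_f\le\xi_f(0)$, and conclude $e_{HK}(R/(f))\cdot{\rm fpt}(f)\ge 1$.

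I do not expect any genuine obstacle; the statement is labelled a corollary precisely because it is a short deduction from Theorem \ref{th1} and the definitions. The only points needing a moment's care are the bookkeeping at the endpoint $x=1$ and the verification ${\rm fpt}(f)\le 1$ (which guarantees that the interval on which the crude bound $\xi_f\le e_{HK}(R/(f))$ is used lies inside $[0,1]$), and both are immediate. As an alternative I could avoid $\xi_f$ entirely and argue at the finite level: from the identity $\frac1{p^e}\sum_{t=0}^{p^e-1}C_{e,t}=1$ (the computation preceding Theorem \ref{th1}), the vanishing $C_{e,t}=0$ for $t\ge\mu_f(p^e)$, and the monotonicity $C_{e,t}\le C_{e,0}$ from $\eqref{1}$, one gets $1\le\frac{\mu_f(p^e)}{p^e}\,C_{e,0}$, and then lets $e\to\infty$ using $C_{e,0}\to e_{HK}(R/(f))$ and $\mu_f(p^e)/p^e\to{\rm fpt}(f)$.
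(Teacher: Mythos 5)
Your proof is correct and is exactly the deduction the paper intends: the paper gives no written argument, merely asserting that the corollary follows from Theorem \ref{th1} 3) and 5), and your integral estimate $1=\int_0^1\xi_f(x)\,dx\le {\rm fpt}(f)\cdot\xi_f(0)$ is that deduction, with the two ingredients the paper leaves implicit (monotonicity and nonnegativity of $\xi_f$ from part 1), and the vanishing of $\xi_f$ on $({\rm fpt}(f),1]$, which is established inside the paper's proof of part 4)) correctly supplied. Your finite-level alternative, $p^e=\sum_{t=0}^{\mu_f(p^e)-1}\ell_R(M_{e,t})/p^{en}\le\mu_f(p^e)\,C_{e,0}$ followed by $e\to\infty$, is also valid and arguably cleaner, since it avoids the limsup and integrability machinery altogether.
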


\begin{Example}
\begin{rm}
Suppose $R=k\jump{X_1, X_2,\dots , X_{n+1}}$ and $\alpha >0$.
Then $e_{HK}(R/(X^{\alpha}_1))=\alpha$ and ${\rm fpt}(X^{\alpha}_1) = \displaystyle\frac{1}{\alpha}$.
Therefore, if $\tau(f)=X^{\alpha}_1$ for a linear transformation $\tau$ (for example, $f=X_{1}+X_{2}$),
then $e_{HK}(R/(f))\times {\rm fpt}(f) = 1$ and $s(R/(f))=1$
(see section $\ref{Example}$).
We do not know another example that the equality holds in Corollary \ref{1.8}.
\end{rm}
\end{Example}

\begin{Remark}\label{1.10}
\begin{rm}
By Theorem $\ref{th1}$ $1)$, $3)$ and $5)$,
we immediately know that
$e_{HK}(R/(f))=1$ if and only if $s(R/(f))=1$.
These conditions are equivalent to 
that $R/(f)$ is regular by the following results.
\end{rm}
\begin{enumerate}[$1)$]
\item
Let
$S$ be an unmixed local ring of positive characteristic.
Then $e_{HK}(S)=1$ if and only if $S$ 
is regular 
(\cite{WY00}, Theorem $1.5$).

\item
Let $S$ be a reduced F-finite Cohen-Macaulay local ring of positive characteristic. Then $s(S)=1$ if and only if $S$ is regular
(\cite{HL02}, Corollary $16$). 
\end{enumerate}
\end{Remark}

\begin{Remark}
\begin{rm}
Let $m < n = \dim R/(f)$, and set $a_{e}= \ell (M_{e,\, p^{e}-1})$.
Assume that $a_{e} = \alpha p^{em} + o(p^{em})$,
that is $\displaystyle\lim_{e\to\infty}\frac{a_{e}}{p^{em}} = \alpha$.
Let $g_{e} = a_{e} - \alpha p^{em}$.
Then 
\begin{eqnarray*}
\varphi_{f}(1) - \varphi_{f}\left( \frac{p^{e}-1}{p^{e}} \right)
&=&
\sum_{i=0}^{p^{e}-1} \frac{\ell(M_{e,\, i})}{p^{e(n+1)}}
-
\sum_{i=0}^{p^{e}-2} \frac{\ell(M_{e,\, i})}{p^{e(n+1)}} \\
&=&
\frac{\ell(M_{e,\, p^{e}-1})}{p^{e(n+1)}} \\
&=&
\frac{\alpha}{p^{e(n-m+1)}} + \frac{g_{e}}{p^{e(n+1)}}
\end{eqnarray*}
holds.
Let $x=\displaystyle\frac{p^{e}-1}{p^{e}}$.
Since $x-1 = -\displaystyle\frac{1}{p^{e}}$,
we know
\begin{equation}\label{1.12}
\varphi_{f}(x) = \varphi_{f}(1) + (-1)^{n-m}\alpha (x-1)^{n-m+1}+o((x-1)^{n-m+1}).
\end{equation}
Since $\varphi_{f}(x)$ is continuous on $[0,1]$ from Remark $\ref{thRem}$, 
$\varphi_{f}(x)$ has the form of $\eqref{1.12}$ around the point $x=1$. 
Therefore,
if $\varphi_{f}(x)$ is equal to its Taylor series around the point $x=1$,
we obtain that 
\begin{eqnarray*}
\varphi_{f}^{(i)}(1)
&=&
\begin{cases}
  0 & (i=1,2, \dots , n-m)\\
  (-1)^{n-m}(n-m+1)! \alpha & (i=n-m+1)
 \end{cases},\\
\xi_{f}^{(i)}(x)
&=&
\begin{cases}
  0 & (i=1,2, \dots , n-m-1)\\
  (-1)^{n-m}(n-m+1)! \alpha & (i=n-m)
 \end{cases}.
\end{eqnarray*}
\end{rm}
\end{Remark}


\section{Proof of main theorem}\label{main theorem}

Let 
$F:R \rightarrow R$ 
be the Frobenius map 
$a \mapsto a^p$. 
Since $k$ is perfect, 
we have 
$F^{}_{*}R \simeq R^{\oplus p^{n+1}}$,
where $F_{*}R$ stands for $F^{1}_{*}R$. 
Therefore,
$
\displaystyle\frac{(f^{pt})+
{\mathfrak m}^{[p^{e+1}]}}{(f^{pt+p})+
\mathfrak{m}^{[p^{e+1}]}}
= M_{e, \, t} \otimes_{R} F^{}_{*}R
 \simeq  (M_{e,\,t})^{\oplus p^{n+1}}
$ 
for all $e, t \ge 0$.
Consequently, 
\begin{equation}\label{2}
p\times C_{e,\,t} = C_{e+1,\, pt} + C_{e+1,\, pt+1}+\cdots + C_{e+1,\, pt+p-1},
\end{equation}
where the sum on the right-hand side of $\eqref{2}$ has $p$-terms.
That is, 
$C_{e,\,t}$ is the mean of 
$C_{e+1,\, pt}$, $C_{e+1,\, pt+1}$, $\cdots$, $C_{e+1,\, pt+p-1}$. 
Therefore, by $\eqref{1}$ and $\eqref{2}$, 
we obtain the following inequalities immediately.

\begin{Lemma}\label{3}
$C_{e+1,\, pt} \ge C_{e,\,t} \ge C_{e+1,\, pt+p-1}$.
\end{Lemma}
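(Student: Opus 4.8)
The plan is to deduce both inequalities directly from \eqref{1} and \eqref{2}, with no further input needed. The key observation is that \eqref{2} exhibits $C_{e,\,t}$ as the arithmetic mean of the $p$ numbers $C_{e+1,\,pt},\, C_{e+1,\,pt+1},\, \dots,\, C_{e+1,\,pt+p-1}$, while \eqref{1} guarantees that these $p$ numbers are listed in non-increasing order. So the lemma is nothing more than the elementary fact that the mean of a finite monotone sequence lies between its first and its last term.

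First I would fix $e$ and $t$ and apply the chain of inequalities \eqref{1} at level $e+1$ to the block of consecutive indices $pt,\, pt+1,\, \dots,\, pt+p-1$, which yields
\[
C_{e+1,\,pt} \;\ge\; C_{e+1,\,pt+j} \;\ge\; C_{e+1,\,pt+p-1}
\qquad (0 \le j \le p-1).
\]
Next I would sum these inequalities over $j = 0, 1, \dots, p-1$, obtaining
\[
p\,C_{e+1,\,pt} \;\ge\; \sum_{j=0}^{p-1} C_{e+1,\,pt+j} \;\ge\; p\,C_{e+1,\,pt+p-1}.
\]
Finally I would rewrite the middle sum using \eqref{2}, which identifies it with $p\,C_{e,\,t}$, and divide through by $p$ to conclude $C_{e+1,\,pt} \ge C_{e,\,t} \ge C_{e+1,\,pt+p-1}$, as desired.

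There is no real obstacle in this argument: the substantive work has already been done in establishing the base-change identity \eqref{2} (where perfectness of $k$, hence freeness of $F_*R$ of rank $p^{n+1}$, is used) together with the monotonicity \eqref{1}. The only point deserving a word of care is the degenerate range $t \ge p^e$, where all the quantities involved are $0$ by \eqref{1} and the inequalities hold trivially; for the remaining values the displayed computation is exactly the claim.
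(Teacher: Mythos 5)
Your argument is correct and is exactly the one the paper uses: equation \eqref{2} exhibits $C_{e,\,t}$ as the mean of the $p$ consecutive values $C_{e+1,\,pt},\dots,C_{e+1,\,pt+p-1}$, which are non-increasing by \eqref{1}, so the mean is squeezed between the first and last of them. No differences worth noting.
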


Hence, by $\eqref{1}$ and Lemma $\ref{3}$, we have
\[
\begin{array}{ccccc}
C_{e,\,\lfloor xp^e\rfloor -1 } & \underset{\rm{by}\;\rm{Lemma}\;\ref{3}}{\ge} & 
C_{e+1,\,(\lfloor xp^e\rfloor-1)p+(p-1)} &
\ge & 
C_{e+1,\,\lfloor xp^{e+1}\rfloor -1} \\
\rotatebox[origin=c]{90}{$\le$} & & & & \rotatebox[origin=c]{90}{$\le$} \\\smallskip
 C_{e,\,\lfloor xp^{e}\rfloor} & & & &  C_{e+1,\,\lfloor xp^{e+1}\rfloor}\\
 \rotatebox[origin=c]{90}{$\le$} & & & & \rotatebox[origin=c]{90}{$\le$} \\
C_{e,\,\lceil xp^e \rceil} & \underset{\rm{by}\;\rm{Lemma}\;\ref{3}}{\le} & C_{e+1,\,\lceil xp^e \rceil p} 
&\le& C_{e+1,\,\lceil xp^{e+1} \rceil}
\end{array}
\]
and here, we note that
$\lfloor xp^e \rfloor p \le \lfloor xp^{e+1}\rfloor$ and 
$\lceil xp^e \rceil p \ge \lceil xp^{e+1} \rceil$. 
Therefore, the sequence 
$\{ C_{e,\,\lfloor xp^e\rfloor -1} \}_e$ 
is decreasing,  
the sequence 
$\{ C_{e,\,\lceil xp^e \rceil } \}_e$ 
is increasing, 
and 
$C_{e,\,\lfloor xp^e\rfloor -1} \ge C_{e,\,\lceil xp^e \rceil }$ 
for all $e \ge 0$ by the inequalities $\eqref{1}$. 
Consequently, the limits 
$\displaystyle\lim_{e\to\infty}C_{e,\,\lfloor x p^e\rfloor -1}$
and
$\displaystyle\lim_{e\to\infty}C_{e,\,\lceil x p^e \rceil }$
exist in $\mathbb R$. 
In particular,  
\begin{equation}\label{4}
 C_{e,\,\lfloor \alpha p^e\rfloor -1} 
 \ge
 \displaystyle\lim_{e\to\infty}C_{e,\,\lfloor \alpha p^e\rfloor -1}
 \ge 
 \xi _f(\alpha)
 \ge 
 \displaystyle\lim_{e\to\infty}C_{e,\,\lceil \alpha p^e \rceil }
 \ge
 C_{e,\,\lceil \alpha p^e \rceil }
 \ge 
 0
\end{equation}
holds for any $\alpha \in (0,1]$ and $e$ satisfying $\lfloor \alpha p^e\rfloor -1 \ge 0$.

\begin{Lemma}\label{Cep} 
\begin{rm}
We set $\overline{C}(\alpha)=\displaystyle\lim_{e \to \infty} C_{e,\,\lceil \alpha p^e \rceil}$
for $\alpha\in[0,1]$
and $\underline{C}(\beta)=\displaystyle\lim_{e \to \infty} C_{e,\,\lfloor \beta p^e\rfloor -1}$
for $\beta\in(0,1]$.
\begin{enumerate}[$1)$]
\item
For 
$\alpha \in [0,1]$ 
and any integer 
$i\ge 0$,
$\{ C_{e+1,\,\lceil \alpha p^e \rceil p+i } \}_e$ 
is an increasing sequence. 
The limits
$\displaystyle\lim_{e \to \infty} C_{e+1,\,\lceil \alpha p^e \rceil p+i }$
and
$\displaystyle\lim_{e \to \infty} C_{e,\,\lceil \alpha p^e \rceil + k}$ exist
for any non-negative integers $i$, $k\ge0$.
Furthermore,
\begin{equation}\label{C-over}
\overline{C}(\alpha)
=
\displaystyle\lim_{e \to \infty} C_{e+1,\,\lceil \alpha p^e \rceil p +i}
=
\displaystyle\lim_{e \to \infty} C_{e,\,\lceil \alpha p^e \rceil + k}
\end{equation}
holds.

\item
For 
$\beta \in (0,1]$ and any integer $i> 0$, 
$\{C_{e+1,\,\lfloor \beta p^e\rfloor p-i}\}_e$ 
is a decreasing sequence. 
The limits
$\displaystyle\lim_{e \to \infty} C_{e+1,\,\lfloor \beta p^e\rfloor p-i}$
and
$\displaystyle\lim_{e \to \infty} C_{e,\,\lfloor \beta p^e\rfloor -k}$
exist
for any positive integers $i$, $k>0$.
Furthermore,
\begin{equation}\label{C-under}
\underline{C}(\beta)
=
\displaystyle\lim_{e \to \infty} C_{e+1,\,\lfloor \beta p^e\rfloor p-i}
=
\displaystyle\lim_{e \to \infty} C_{e,\,\lfloor \beta p^e\rfloor -k}
\end{equation}
holds.
\end{enumerate}
\end{rm}
\end{Lemma}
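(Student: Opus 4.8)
Both parts are dual to one another under the exchange \((\text{ceiling},\text{increasing})\leftrightarrow(\text{floor},\text{decreasing})\), so the plan is to carry out $1)$ in full and then indicate the routine sign changes for $2)$. The only ingredients are the averaging identity $\eqref{2}$, the monotonicity in the second index coming from $\eqref{1}$, Lemma $\ref{3}$, and the elementary inequalities $\lceil\alpha p^{e+1}\rceil\le p\lceil\alpha p^e\rceil\le\lceil\alpha p^{e+1}\rceil+(p-1)$ together with their floor analogues $p\lfloor\beta p^e\rfloor\le\lfloor\beta p^{e+1}\rfloor\le p\lfloor\beta p^e\rfloor+(p-1)$, all immediate from the definitions. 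I also use that $\overline{C}(\alpha)$ and $\underline{C}(\beta)$ exist and that $\{C_{e,\,\lceil\alpha p^e\rceil}\}_e$ is increasing (resp. $\{C_{e,\,\lfloor\beta p^e\rfloor-1}\}_e$ decreasing), which was established just before the statement.

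First I would prove monotonicity of the sequences in $e$. Applying the left inequality of Lemma $\ref{3}$ with $t=\lceil\alpha p^e\rceil p+i$ gives $C_{e+1,\,\lceil\alpha p^e\rceil p+i}\le C_{e+2,\,\lceil\alpha p^e\rceil p^2+ip}$, and since $\lceil\alpha p^e\rceil p\ge\lceil\alpha p^{e+1}\rceil$ and $ip\ge i$, monotonicity in the second index yields $\le C_{e+2,\,\lceil\alpha p^{e+1}\rceil p+i}$; hence this sequence is increasing. The same two lines with the right inequality of Lemma $\ref{3}$ and the hypothesis $i>0$ show $\{C_{e+1,\,\lfloor\beta p^e\rfloor p-i}\}_e$ is decreasing. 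An analogous manipulation shows the auxiliary sequences $\{C_{e,\,\lceil\alpha p^e\rceil+k}\}_e$ are increasing and $\{C_{e,\,\lfloor\beta p^e\rfloor-k}\}_e$ decreasing; being bounded (above by $C_{e,\,\lceil\alpha p^e\rceil}\le\overline{C}(\alpha)$, resp. below by $0$), all of them converge. Set $L_k=\lim_e C_{e,\,\lceil\alpha p^e\rceil+k}$, so that $L_0=\overline{C}(\alpha)$ and $(L_k)_k$ is non-increasing; dually $\underline{L}_k=\lim_e C_{e,\,\lfloor\beta p^e\rfloor-k}$, $\underline{L}_1=\underline{C}(\beta)$, $(\underline{L}_k)_k$ non-decreasing.

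The heart of the argument is the identification $L_k=L_0$ for every $k$. Writing $\lceil\alpha p^e\rceil p=\lceil\alpha p^{e+1}\rceil+\delta_e$ with $\delta_e:=p\lceil\alpha p^e\rceil-\lceil\alpha p^{e+1}\rceil\in\{0,\dots,p-1\}$, the averaging identity gives $C_{e,\,\lceil\alpha p^e\rceil+k}=\frac1p\sum_{j=0}^{p-1}C_{e+1,\,\lceil\alpha p^{e+1}\rceil+pk+j+\delta_e}$. Passing to a subsequence on which $\delta_e$ equals a fixed $\delta^{*}$ (pigeonhole) and letting $e\to\infty$ gives $L_k=\frac1p\sum_{j=0}^{p-1}L_{pk+j+\delta^{*}}$; since $pk+j+\delta^{*}\ge k$ and $(L_m)$ is non-increasing, each summand is $\le L_k$, so all equal $L_k$, i.e. $L_{pk+\delta^{*}}=\cdots=L_{pk+\delta^{*}+p-1}=L_k$. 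For $k=0$ this, together with the monotonicity of $(L_m)$ to fill the indices $0\le m<\delta^{*}$, gives $L_m=L_0$ for $0\le m\le\delta^{*}+p-1$; since the windows $\{pk+\delta^{*},\dots,pk+\delta^{*}+p-1\}$ for consecutive $k$ are adjacent, a short induction on the upper index propagates $L_m=L_0$ to all $m$. Then $\lim_e C_{e+1,\,\lceil\alpha p^e\rceil p+i}$ follows by squeezing: $\lceil\alpha p^e\rceil p+i=\lceil\alpha p^{e+1}\rceil+(i+\delta_e)$ with $i+\delta_e\in\{i,\dots,i+p-1\}$, so this term lies between the minimum and maximum of the finitely many sequences $\{C_{e+1,\,\lceil\alpha p^{e+1}\rceil+m}\}_e$, $m=i,\dots,i+p-1$, all tending to $\overline{C}(\alpha)$; this gives $\eqref{C-over}$. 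Part $2)$ and $\eqref{C-under}$ are the mirror image, using $\epsilon_e:=\lfloor\beta p^{e+1}\rfloor-p\lfloor\beta p^e\rfloor$ and the non-decreasing sequence $(\underline{L}_k)_k$.

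The main obstacle is precisely this identification step: the shift $\delta_e$ (resp. $\epsilon_e$) genuinely depends on $e$, so $L_k$ cannot be read off directly as an average of limits already understood. The device that resolves it is to restrict to a constant-shift subsequence and then combine the pigeonhole-forced equalities on length-$p$ windows with the monotonicity of $(L_m)_m$ to fill all gaps; getting the window bookkeeping and the base case of the induction right is where the care is needed, but no idea beyond Lemma $\ref{3}$ and $\eqref{2}$ is required.
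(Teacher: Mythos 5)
Your proof is correct, and its engine is the same as the paper's: the averaging identity $\eqref{2}$ combined with monotonicity in the second index, so that a block of $p$ limits whose average equals its largest (resp.\ smallest) member must be constant. The execution differs in two places. First, the paper applies $\eqref{2}$ at $t=\lceil\alpha p^e\rceil$ and passes to the limit in the $p$ sequences $\{C_{e+1,\,\lceil\alpha p^e\rceil p+i}\}_e$ directly, their convergence having been established beforehand; since nothing is re-indexed relative to $\lceil\alpha p^{e+1}\rceil$, no constant-shift subsequence is needed --- your pigeonhole extraction of a subsequence with $\delta_e=\delta^{*}$ is sound but avoidable. Second, the paper obtains the block equality only for $0\le i\le p-1$ and then extends to general $k$ via the sandwich $C_{e,\,\lceil\alpha p^e\rceil+k}\le C_{e+1,\,\lceil\alpha p^e\rceil p+k}\le C_{e+1,\,\lceil\alpha p^{e+1}\rceil+k}$, leaving the passage from $k\le p-1$ to arbitrary $k$ rather terse; your window induction over $\{pk+\delta^{*},\dots,pk+\delta^{*}+p-1\}$, seeded by monotonicity on $0\le m<\delta^{*}$, covers all $k\ge 0$ explicitly and is, if anything, the more complete account of that step. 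The mirror argument for part $2)$, with the floor inequalities and the hypothesis $i>0$ exactly where the right-hand inequality of Lemma $\ref{3}$ requires it, goes through as you indicate.
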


\begin{proof}
Let 
$\alpha \in [0,1]$ and $\beta\in (0,1]$,
and let
$k \ge 0$ and $\ell >0$ be integers.
We know
\[
\begin{cases}
(\lceil \alpha p^e \rceil p+k)p = 
\lceil \alpha p^e \rceil p^2+kp \ge
\lceil \alpha p^{e+1} \rceil p+kp \ge
\lceil \alpha p^{e+1} \rceil p+k  \\

(\lfloor \beta p^e\rfloor p-\ell)p+(p-1) \le 
\lfloor \beta p^e\rfloor p^2-\ell p+(p-1)\ell \le
\lfloor \beta p^{e+1}\rfloor p-\ell
\end{cases}
, 
\]
and therefore
\[
\begin{cases}
C_{e+1,\,\lceil \alpha p^e \rceil p+k} \le
C_{e+2,\,(\lceil \alpha p^e \rceil p+k)p } \le
C_{e+2,\,\lceil \alpha p^{e+1} \rceil p+k } \le
\displaystyle\lim_{e \to \infty}C_{e,\,0}\\

C_{e+1,\, \lfloor \beta p^{e}\rfloor p-\ell} \ge
C_{e+2,\, (\lfloor \beta p^{e}\rfloor p-\ell )p + (p-1)} \ge
C_{e+2,\, \lfloor \beta p^{e+1}\rfloor p-\ell} \ge 0
\end{cases}
\]
by 
$\eqref{1}$ and Lemma $\ref{3}$. 
Hence, 
$\{ C_{e+1,\,\lceil \alpha p^e \rceil p+k } \}_e$ 
is increasing and bounded. 
$\{C_{e+1,\,\lfloor \beta p^e\rfloor p-\ell}\}_e$ 
is decreasing and bounded. 
Therefore, 
$\displaystyle\lim_{e \to \infty}C_{e+1,\,\lceil \alpha p^e \rceil p+k}$ 
and 
$\displaystyle\lim_{e \to \infty}C_{e+1,\,\lfloor \beta p^e\rfloor p-\ell}$ 
exist.

Next, we shall show that
\begin{equation}\label{2.5.1}
\overline{C}(\alpha)
=
\displaystyle\lim_{e \to \infty} C_{e+1,\,\lceil \alpha p^e \rceil p+i}
\end{equation}
\begin{equation}\label{2.5.3}
\underline{C}(\beta)
=
\displaystyle\lim_{e \to \infty} C_{e+1,\,\lfloor \beta p^e\rfloor p-j}
\end{equation}
holds for any integers $0 \le i \le p-1$ and $1 \le j \le p$.
We have 
\[
\begin{cases}
p\times C_{e,\,\lceil\alpha p^e \rceil} = 
C_{e+1,\, \lceil \alpha p^e \rceil p} + 
C_{e+1,\, \lceil \alpha p^e \rceil p+1}+\cdots + 
C_{e+1,\, \lceil \alpha p^e \rceil p+p-1}\\

p\times C_{e,\,\lfloor \beta p^e\rfloor -1} = 
C_{e+1, \,\lfloor \beta p^e\rfloor p -p} + 
C_{e+1, \,\lfloor \beta p^e\rfloor p -(p-1)}+\cdots + 
C_{e+1, \,\lfloor \beta p^e\rfloor p-1}
\end{cases}
\]
by 
$\eqref{2}$. 
Thus, it holds that
\[
\begin{cases}
p\times \displaystyle\lim_{e \to \infty} C_{e,\,\lceil\alpha p^e \rceil} = 
\displaystyle\lim_{e \to \infty}C_{e+1, \,\lceil \alpha p^e \rceil p} + 
\cdots + 
\displaystyle\lim_{e \to \infty}C_{e+1, \,\lceil \alpha p^e \rceil p+p-1}\\

p\times \displaystyle\lim_{e \to \infty}C_{e,\,\lfloor \beta p^e\rfloor -1} = 
\displaystyle\lim_{e \to \infty}C_{e+1, \,\lfloor \beta p^e\rfloor p -p} + 
\cdots + 
\displaystyle\lim_{e \to \infty}C_{e+1,\, \lfloor \beta p^e\rfloor p-1}
\end{cases}.
\]
On the other hand, we have
\[
\begin{cases}
\displaystyle\lim_{e \to \infty} C_{e,\,\lceil\alpha p^e \rceil} = 
\displaystyle\lim_{e \to \infty}C_{e+1,\, \lceil \alpha p^e \rceil p}
\ge
\lim_{e \to \infty}C_{e+1,\, \lceil \alpha p^e \rceil p+1}
\ge
\cdots
\ge
\lim_{e \to \infty}C_{e+1,\, \lceil \alpha p^e \rceil p +p-1}
\\
\displaystyle\lim_{e \to \infty}C_{e,\,\lfloor \beta p^e\rfloor -1} = 
\displaystyle\lim_{e \to \infty}C_{e+1, \,\lfloor \beta p^e\rfloor p-1}
\le
\lim_{e \to \infty}C_{e+1, \,\lfloor \beta p^e\rfloor p-2}
\le
\cdots
\le
\lim_{e \to \infty}C_{e+1, \,\lfloor \beta p^e\rfloor p-p}
\end{cases}
\]
since 
$C_{e,\,\lceil\alpha p^e \rceil} \le 
C_{e+1, \,\lceil \alpha p^e \rceil p} \le
C_{e+1,\,\lceil\alpha p^{e+1} \rceil}$ 
and 
$C_{e,\,\lfloor \beta p^e\rfloor -1} \ge
C_{e+1, \,\lfloor \beta p^e\rfloor p-1} \ge
C_{e+1, \,\lfloor \beta p^{e+1}\rfloor -1}$.
Consequently, we have the equations $\eqref{2.5.1}$ and $\eqref{2.5.3}$.

In order to complete the proof of the assertion $1)$,
we have the inequalities
\begin{eqnarray*}
C_{e,\,\lceil \alpha p^e \rceil + k}
&\le&
C_{e+1,\,(\lceil \alpha p^e \rceil + k)p}\\
&=&
C_{e+1,\,\lceil \alpha p^e \rceil p + kp}\\
&\le&
C_{e+1,\,\lceil \alpha p^{e} \rceil p + k}\\
&\le&
C_{e+1,\,\lceil \alpha p^{e+1} \rceil + k}
\end{eqnarray*}
for any $k\ge 1$.
Hence,
\[
\displaystyle\lim_{e \to \infty}C_{e,\,\lceil \alpha p^e \rceil + k}
=
\displaystyle\lim_{e \to \infty}C_{e+1,\,\lceil \alpha p^{e} \rceil p + k}
\]
holds.
Therefore, we obtain the equation \eqref{C-over}.

In order to complete the proof of the assertion $2)$,
we have the inequalities
\begin{eqnarray*}
C_{e,\,\lfloor \beta p^e \rfloor - k}
&\ge&
C_{e+1,\,(\lfloor \beta p^e \rfloor - k)p +p-1}\\
&=&
C_{e+1,\,\lfloor \beta p^e \rfloor p - (k-1)p -1}\\
&\ge&
C_{e+1,\,\lfloor \beta p^e \rfloor p- k}\\
&\ge&
C_{e+1,\,\lfloor \beta p^{e+1} \rfloor - k}
\end{eqnarray*}
for any $k \ge 2$.
Hence,
\[
\displaystyle\lim_{e \to \infty}C_{e,\,\lfloor \beta p^e \rfloor - k}
=
\displaystyle\lim_{e \to \infty}C_{e+1,\,\lfloor \beta p^e \rfloor p -k}
\]
holds.
Therefore, we obtain the equation \eqref{C-under}.
\end{proof}

\begin{Proposition}\label{lrlim}  
\begin{rm}
\begin{enumerate}[$1)$]
\item
For 
$\alpha \in [0,1)$, 
$\displaystyle{\lim_{x \to \alpha +0} \xi_f(x)=
\lim_{e \to \infty} C_{e,\,\lceil \alpha p^e \rceil }}$ 
holds. 

\item
For 
$\beta \in (0,1]$, 
$\displaystyle{\lim_{x \to \beta -0} \xi_f(x)=
\lim_{e \to \infty} C_{e,\,\lfloor \beta p^e\rfloor -1}}$ 
holds.

\end{enumerate}
$\newline$
In particular, we have 
\[
\begin{cases}
\displaystyle{\lim_{x \to +0} \xi_f(x) =
\lim_{e \to \infty} C_{e,\,0 }} = \xi_f(0) \\

\displaystyle{\lim_{x \to 1-0} \xi_f(x) =
\lim_{e \to \infty} C_{e,\,p^e-1}} = \xi_f(1)
\end{cases},
\]
that is to say that 
$\xi_f(x)$ 
is continuous at 
$x=0$ and $1$.  
\end{rm}
\end{Proposition}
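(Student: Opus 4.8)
The plan is to exploit that $\xi_f$ is decreasing on $[0,1]$, which already guarantees that for $\alpha\in[0,1)$ the right limit $\lim_{x\to\alpha+0}\xi_f(x)=\sup_{x\in(\alpha,1)}\xi_f(x)$ exists and for $\beta\in(0,1]$ the left limit $\lim_{x\to\beta-0}\xi_f(x)=\inf_{x\in(0,\beta)}\xi_f(x)$ exists; the job is to identify these with $\overline C(\alpha)=\lim_e C_{e,\,\lceil\alpha p^e\rceil}$ and $\underline C(\beta)=\lim_e C_{e,\,\lfloor\beta p^e\rfloor-1}$, whose existence is Lemma \ref{Cep}. One inequality is essentially free from \eqref{4}, which gives $\xi_f(\alpha)\ge\overline C(\alpha)$ and $\xi_f(\beta)\le\underline C(\beta)$ at the point itself; the work is to transfer these bounds to nearby points and to squeeze.

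For $1)$ I would first prove $\xi_f(x)\le\overline C(\alpha)$ for every $x\in(\alpha,1)$. Since $x>\alpha$, one has $\lfloor xp^e\rfloor-\lceil\alpha p^e\rceil\ge (x-\alpha)p^e-2\to\infty$, so $\lfloor xp^e\rfloor\ge\lceil\alpha p^e\rceil$ for $e\gg0$, and then \eqref{1} gives $\xi_{f,e}(x)=C_{e,\,\lfloor xp^e\rfloor}\le C_{e,\,\lceil\alpha p^e\rceil}$; taking $\limsup_{e\to\infty}$ and using convergence of $\{C_{e,\,\lceil\alpha p^e\rceil}\}_e$ yields the bound, hence $\lim_{x\to\alpha+0}\xi_f(x)\le\overline C(\alpha)$. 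For the reverse inequality I would feed \eqref{4} a carefully perturbed sequence $x_m\downarrow\alpha$: put $x_m=(\lceil\alpha p^m\rceil+1)/p^m$, so that $x_m>\alpha$, $x_m\to\alpha$, $x_m\in(0,1)$ for $m\gg0$, and $\lfloor x_m p^m\rfloor=\lceil x_m p^m\rceil=\lceil\alpha p^m\rceil+1$; then \eqref{4} applied at the point $x_m$ with $e=m$ gives $\xi_f(x_m)\ge C_{m,\,\lceil\alpha p^m\rceil+1}$, and by \eqref{C-over} with the shift $k=1$ the right-hand side tends to $\overline C(\alpha)$, so $\lim_{x\to\alpha+0}\xi_f(x)=\lim_m\xi_f(x_m)\ge\overline C(\alpha)$. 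This proves $1)$.

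Part $2)$ is the mirror image. For $x\in(0,\beta)$ one has $\lfloor\beta p^e\rfloor-\lfloor xp^e\rfloor\ge (\beta-x)p^e-1\to\infty$, so $\lfloor xp^e\rfloor\le\lfloor\beta p^e\rfloor-1$ for $e\gg0$, whence $\xi_{f,e}(x)=C_{e,\,\lfloor xp^e\rfloor}\ge C_{e,\,\lfloor\beta p^e\rfloor-1}$ by \eqref{1}; passing to $\limsup$ gives $\xi_f(x)\ge\underline C(\beta)$ and hence $\lim_{x\to\beta-0}\xi_f(x)\ge\underline C(\beta)$. For the opposite bound take $x_m=(\lfloor\beta p^m\rfloor-1)/p^m$, so that $x_m<\beta$, $x_m\to\beta$, and $\lfloor x_m p^m\rfloor=\lfloor\beta p^m\rfloor-1$; then \eqref{4} at $x_m$ with $e=m$ gives $\xi_f(x_m)\le C_{m,\,\lfloor\beta p^m\rfloor-2}$, which by \eqref{C-under} with the shift $k=2$ tends to $\underline C(\beta)$, so $\lim_{x\to\beta-0}\xi_f(x)\le\underline C(\beta)$. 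For the final assertion I specialize $\alpha=0$ and $\beta=1$: then $\overline C(0)=\lim_e C_{e,\,0}$ and $\underline C(1)=\lim_e C_{e,\,p^e-1}$, while by definition $\xi_f(0)=\limsup_e C_{e,\,0}$ and $\xi_f(1)=\limsup_e C_{e,\,p^e-1}$; since $\{C_{e,\,0}\}_e$ is increasing and $\{C_{e,\,p^e-1}\}_e$ is decreasing, these $\limsup$'s are genuine limits, so $\xi_f(0)=\overline C(0)=\lim_{x\to+0}\xi_f(x)$ and $\xi_f(1)=\underline C(1)=\lim_{x\to1-0}\xi_f(x)$, i.e.\ $\xi_f$ is continuous at the endpoints.

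I expect the reverse inequalities in $1)$ and $2)$ to be the main obstacle. The naive approximating sequences $\lceil\alpha p^m\rceil/p^m$ and $\lfloor\beta p^m\rfloor/p^m$ collapse onto $\alpha$ and $\beta$ exactly when those numbers are $p$-adic rationals, which forces the $\pm1/p^m$ perturbation above; that perturbation is harmless only because shifting the second index of $C_{e,\,\cdot}$ by a bounded amount does not change the limit, which is precisely the content of \eqref{C-over} and \eqref{C-under}. Once that invariance is invoked, the rest is routine bookkeeping with floors and ceilings, together with checking the mild hypotheses of \eqref{4} (that the relevant indices are nonnegative and the chosen points lie in $[0,1]$).
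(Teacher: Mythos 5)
Your proof is correct and follows essentially the same route as the paper's: the easy direction comes from comparing indices ($\lceil \alpha p^e\rceil \le \lfloor x p^e\rfloor$ for $x>\alpha$ and $e\gg 0$) together with the monotonicity \eqref{1} and \eqref{4}, and the hard direction uses the shift-invariance of the limits in Lemma \ref{Cep} applied to points of the form $(\lceil \alpha p^m\rceil+1)/p^m$ (resp. $(\lfloor \beta p^m\rfloor-1)/p^m$). The only cosmetic difference is that the paper picks an auxiliary point strictly between $\alpha$ and $(\lceil\alpha p^e\rceil+1)/p^e$ rather than evaluating at that endpoint, and writes out only part $1)$, declaring part $2)$ analogous.
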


\proof
\begin{rm}
1) First, we show 
$\displaystyle{\lim_{x \to \alpha +0} \xi_f(x) \le
 \lim_{e \to \infty} C_{e,\,\lceil \alpha p^e \rceil }}$. 
Take $x_0 > \alpha$.
For a large enough number 
$e'$, 
we may assume that
$\alpha p^{e'} \le x_0 p^{e'}-2$ 
holds. 
Then, 
$\lceil \alpha p^{e'} \rceil \le \lfloor x_0 p^{e'}\rfloor -1$. 
Hence, by the inequalities 
$\eqref{1}$ and $\eqref{4}$,
\[
  \xi_f(x_0)
  \le
  C_{e',\, \lfloor x_0 p^{e'}\rfloor -1}
  \le
  C_{e',\,\lceil \alpha p^{e'} \rceil }
  \le
  \displaystyle\lim_{e\to \infty}C_{e,\,\lceil \alpha p^{e} \rceil }
\]
as desired.

Next, we shall show the opposite inequality.
By Lemma $\ref{Cep}$ $1)$, we have only to show that 
\[
   \displaystyle{\lim_{x \to \alpha +0} \xi_f(x)
  \ge 
   \lim_{e \to \infty} C_{e,\,\lceil \alpha p^e \rceil +1}}.
\]
For any $e\ge 0$, 
$\alpha < \displaystyle\frac{\lceil \alpha p^{e}\rceil +1}{p^{e}}$. 
Hence, there exists a real number
$x_1 \in {\mathbb R}$ 
such that 
$\alpha < x_1 < \displaystyle\frac{\lceil \alpha p^{e}\rceil +1}{p^{e}}$. 
Then 
$\lceil x_1 p^{e}\rceil \le
\lceil \alpha p^{e}\rceil +1$,
and therefore 
\[
\displaystyle\lim_{x \to \alpha +0}\xi_f(x)
  \ge
  \xi_f(x_1)
  \ge
  C_{e, \,\lceil x_1 p^{e}\rceil}
  \ge
  C_{e,\,\lceil \alpha p^{e}\rceil +1}
\]
for any $e\ge 0$ 
because we have the inequalities $\eqref{1}$ and $\eqref{4}$, 
and $\xi_f(x)$ is decreasing.
Consequenty, 
\[
  \displaystyle{\lim_{x \to \alpha +0} \xi_f(x)
  \ge
  \lim_{e\to\infty}C_{e,\,\lceil \alpha p^{e}\rceil+1}}
\]
as desired. 

2) It is proved in the same way as $1)$.
\end{rm}
\qed

\begin{Remark}\label{2start}
\begin{rm}
From the inequalities $\eqref{1}$, we have 
\[
  C_{e,\,\lfloor \alpha p^e\rfloor -1} 
  \ge 
  \xi_{f,e}(\alpha) 
  =
  C_{e,\,\lfloor \alpha p^e\rfloor} 
  \ge C_{e,\,\lceil \alpha p^e \rceil }
\]
for any $\alpha \in [0,1]$.
Hence, if 
\[
\displaystyle\lim_{e \to \infty}C_{e,\,\lfloor \alpha p^e\rfloor -1} = 
\lim_{e \to \infty}C_{e,\,\lceil \alpha p^e \rceil },
\]
there exists 
$\displaystyle\lim_{e \to \infty}\xi _{f,e}(\alpha)$
in $\mathbb R$, 
and it is eqaul to $\xi_f(\alpha)$. 
\end{rm}
\end{Remark}

\begin{Corollary}\label{2.5}
\begin{rm}
If 
$\xi_f(x)$ 
is continuous at 
$\alpha \in [0,1]$ 
then 
$\displaystyle\lim_{e\to \infty}\xi_{f,e}(\alpha)$ exists, 
so that it is equal to 
$\xi_f(\alpha)$. 
\end{rm}
\end{Corollary}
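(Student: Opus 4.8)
The plan is to deduce the corollary from Proposition \ref{lrlim} together with the squeeze recorded in Remark \ref{2start}. Recall that $\xi_f$ is monotone decreasing on $[0,1]$, so the one-sided limits $\lim_{x\to\alpha-0}\xi_f(x)$ and $\lim_{x\to\alpha+0}\xi_f(x)$ always exist; continuity of $\xi_f$ at $\alpha$ says precisely that both of them coincide with $\xi_f(\alpha)$.

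First I would treat an interior point $\alpha\in(0,1)$. Since $\alpha>0$ we have $\lfloor\alpha p^e\rfloor-1\ge 0$ for all sufficiently large $e$, so Remark \ref{2start} applies and gives $C_{e,\,\lfloor\alpha p^e\rfloor-1}\ge\xi_{f,e}(\alpha)\ge C_{e,\,\lceil\alpha p^e\rceil}$. By Proposition \ref{lrlim} the two outer sequences converge, with $\lim_{e\to\infty} C_{e,\,\lfloor\alpha p^e\rfloor-1}=\lim_{x\to\alpha-0}\xi_f(x)$ and $\lim_{e\to\infty} C_{e,\,\lceil\alpha p^e\rceil}=\lim_{x\to\alpha+0}\xi_f(x)$. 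Continuity of $\xi_f$ at $\alpha$ forces these two limits to be equal, both equal to $\xi_f(\alpha)$; hence by the squeeze $\lim_{e\to\infty}\xi_{f,e}(\alpha)$ exists and equals $\xi_f(\alpha)$, as claimed.

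It remains to handle the endpoints, where the argument is even more direct (and in fact the continuity hypothesis is automatic by the last assertion of Proposition \ref{lrlim}). At $\alpha=0$ we have $\xi_{f,e}(0)=C_{e,\,0}$, and $\{C_{e,\,0}\}_e$ is an increasing sequence bounded above by $e_{HK}(R/(f))$ by the discussion following Lemma \ref{3}; hence it converges, necessarily to $\xi_f(0)$. At $\alpha=1$ we have $\xi_{f,e}(1)=C_{e,\,p^e-1}$, and Proposition \ref{lrlim} 2) applied with $\beta=1$ gives $\lim_{e\to\infty}C_{e,\,p^e-1}=\lim_{x\to1-0}\xi_f(x)=\xi_f(1)$.

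I do not expect a serious obstacle here: essentially the whole content has been front-loaded into Proposition \ref{lrlim} and Remark \ref{2start}. The only points needing a word of care are the indexing constraint $\lfloor\alpha p^e\rfloor-1\ge 0$ appearing in Remark \ref{2start} (which forces the separation of the interior case from the boundary cases) and the remark that monotonicity of $\xi_f$ guarantees the one-sided limits exist, so that ``$\xi_f$ continuous at $\alpha$'' may legitimately be read as ``the left and right limits of $\xi_f$ at $\alpha$ agree.''
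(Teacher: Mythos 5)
Your proof is correct and follows essentially the same route as the paper, which deduces the corollary ``immediately'' from Remark \ref{2start} combined with Proposition \ref{lrlim} (squeeze between $C_{e,\,\lfloor\alpha p^e\rfloor-1}$ and $C_{e,\,\lceil\alpha p^e\rceil}$, whose limits are the one-sided limits of $\xi_f$ at $\alpha$). Your explicit separation of the endpoint cases to respect the indexing constraint $\lfloor\alpha p^e\rfloor-1\ge 0$ is a small but welcome bit of extra care that the paper glosses over.
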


\proof
The proof is obtained from Remark $\ref{2start}$ immediately.
\qed

We have just shown Theorem $\ref{th1}$ $1)$.

We obtain the following Corollary $\ref{Cor}$ immediately 
from Proposition $\ref{lrlim}$. 

\begin{Corollary}\label{Cor}
\begin{rm}
We define $\varphi_f(x)$ by 
\[
\varphi_f(x)= \displaystyle\int_0^x \xi_f(t) dt
\]
for $x \in [0,1]$. 
Then we have the followings.
\end{rm}
\begin{enumerate}[$1)$]
\item
$\varphi_f(x)$ is differentiable at $0$, and $\varphi'_f(0) = \xi_f(0) = \displaystyle\lim_{e \to \infty} C_{e,\,0 } = e_{HK}(R/(f))$. 
  
\item
  $\varphi_f(x)$ is differentiable at $1$, and $\varphi'_f(1) = \xi_f(1) = \displaystyle\lim_{e \to \infty} C_{e,\,p^e-1 }$. 
\end{enumerate}
\end{Corollary}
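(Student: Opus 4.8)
The plan is to derive both statements from the fundamental theorem of calculus combined with the one-sided continuity of $\xi_f$ at the endpoints, which is exactly the content of Proposition~\ref{lrlim}. Since $\xi_f$ is bounded and decreasing, hence integrable, on $[0,1]$, the function $\varphi_f(x)=\int_0^x\xi_f(t)\,dt$ is well defined (and Lipschitz, as noted in Remark~\ref{thRem}), so the only task is to compute its one-sided derivatives at $x=0$ and $x=1$ and to identify the resulting values.

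For $1)$, I would examine the right-hand difference quotient
\[
\frac{\varphi_f(x)-\varphi_f(0)}{x}=\frac{1}{x}\int_0^x\xi_f(t)\,dt\qquad(0<x\le 1),
\]
which is the average of $\xi_f$ over $[0,x]$. Because $\xi_f$ is decreasing, $\xi_f(x)\le\xi_f(t)\le\xi_f(0)$ for all $t\in[0,x]$, so this average is trapped: $\xi_f(x)\le\frac{1}{x}\int_0^x\xi_f(t)\,dt\le\xi_f(0)$. Letting $x\to 0+$ and invoking $\lim_{x\to 0+}\xi_f(x)=\xi_f(0)$ from Proposition~\ref{lrlim}, the squeeze gives $\varphi'_f(0)=\xi_f(0)$. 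It then remains to identify $\xi_f(0)$: since $\{C_{e,\,0}\}_e$ is increasing by Lemma~\ref{3}, we have $\xi_f(0)=\limsup_{e\to\infty}\xi_{f,e}(0)=\lim_{e\to\infty}C_{e,\,0}=\lim_{e\to\infty}\ell_R(R/(f)+\mathfrak{m}^{[p^e]})/p^{en}=e_{HK}(R/(f))$, as already recorded in the introduction.

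For $2)$ the argument is symmetric: for $0\le x<1$,
\[
\frac{\varphi_f(1)-\varphi_f(x)}{1-x}=\frac{1}{1-x}\int_x^1\xi_f(t)\,dt
\]
is the average of $\xi_f$ over $[x,1]$, which by monotonicity lies between $\xi_f(1)$ and $\xi_f(x)$. Since $\lim_{x\to 1-}\xi_f(x)=\xi_f(1)=\lim_{e\to\infty}C_{e,\,p^e-1}$ by Proposition~\ref{lrlim}, the squeeze yields $\varphi'_f(1)=\xi_f(1)=\lim_{e\to\infty}C_{e,\,p^e-1}$.

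I do not anticipate a real obstacle: essentially all of the work is already packaged in Proposition~\ref{lrlim}, and the passage to the derivative is the standard averaging argument above. The only point requiring a little care is that $0$ and $1$ are endpoints of $[0,1]$, so ``differentiable at $0$'' (resp. at $1$) refers to the one-sided derivative, and the squeeze works precisely because the monotonicity of $\xi_f$ forces its average over a short interval adjacent to $0$ (resp. $1$) to lie between the value at the moving endpoint and the value at $0$ (resp. $1$).
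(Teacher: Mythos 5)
Your proof is correct and follows the paper's intended route: the paper simply declares the corollary ``immediate'' from Proposition~\ref{lrlim}, and your averaging/squeeze argument (difference quotient trapped between $\xi_f$ at the endpoints by monotonicity, then one-sided continuity from Proposition~\ref{lrlim}) is exactly the omitted detail, with the identification $\xi_f(0)=\lim_{e\to\infty}C_{e,\,0}=e_{HK}(R/(f))$ handled as in the introduction via Lemma~\ref{3}.
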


Set 
$\mu_f(p^e)=\min\{ t\ge 0  \:|\; f^t \in \mathfrak{m}^{[p^e]}\}$ 
for each $e \ge 0$. 
Since 
$f^{\mu_f(p^e)} \in \mathfrak{m}^{[p^e]}$, 
$f^{\mu_f(p^e) p} \in \mathfrak{m}^{[p^{e+1}]}$. 
Hence 
${\mu_f(p^e) p} \ge {\mu_f(p^{e+1})}$, 
and so 
\[
 1\ge\displaystyle\frac{\mu_f(p^e)}{p^e} \ge \frac{\mu_f(p^{e+1})}{p^{e+1}}\ge0.
\] 
Since $\left\{\displaystyle\frac{\mu_f(p^e)}{p^e}\right\}_{e\ge 0}$ 
is decreasing and bounded below, 
the limit
$\displaystyle\lim_{e \to \infty}\frac{\mu_f(p^e)}{p^e}$ exists in $\mathbb R$, and 
it is called the F-pure threshold of $f$, 
denoted by ${\rm fpt}(f)$. 
It is easy to see that ${\rm fpt}(f) \in (0,1]$,
and ${\rm fpt}(f)=1$ if and only if $\mu_f(p^e)=p^{e}$ for any $e\ge1$.

\begin{Lemma}\label{mu}    
\begin{rm}
$C_{e, \,t} = 0$ 
if and only if 
$t \ge \mu_f(p^e)$.
\end{rm}
\end{Lemma}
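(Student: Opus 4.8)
The plan is to translate the vanishing of $C_{e,t}$ into an ideal-membership statement and then exploit the fact that $R$ is local with $f\in\mathfrak m$. Since $C_{e,t}=\ell_R(M_{e,t})/p^{en}$ and lengths are non-negative, I would first note that $C_{e,t}=0$ is equivalent to $M_{e,t}=0$, i.e. to the equality of ideals $(f^{t})+\mathfrak m^{[p^e]}=(f^{t+1})+\mathfrak m^{[p^e]}$, which in turn is equivalent to the single containment $f^{t}\in(f^{t+1})+\mathfrak m^{[p^e]}$. (Here one may equally well use the identification $M_{e,t}=f^t\overline R/f^{t+1}\overline R$ recorded just before the definition of $C_{e,t}$.)

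For the ``if'' direction, I would assume $t\ge\mu_f(p^e)$. By definition $f^{\mu_f(p^e)}\in\mathfrak m^{[p^e]}$, and since $\mathfrak m^{[p^e]}$ is an ideal, $f^{t}=f^{\,t-\mu_f(p^e)}\cdot f^{\mu_f(p^e)}\in\mathfrak m^{[p^e]}$, and likewise $f^{t+1}\in\mathfrak m^{[p^e]}$. Hence $(f^{t})+\mathfrak m^{[p^e]}=\mathfrak m^{[p^e]}=(f^{t+1})+\mathfrak m^{[p^e]}$, so $M_{e,t}=0$ and $C_{e,t}=0$.

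For the ``only if'' direction, I would assume $C_{e,t}=0$, so $f^{t}\in(f^{t+1})+\mathfrak m^{[p^e]}$, and write $f^{t}=g f^{t+1}+h$ with $g\in R$ and $h\in\mathfrak m^{[p^e]}$. Rearranging gives $f^{t}(1-gf)=h\in\mathfrak m^{[p^e]}$. Since $f\in\mathfrak m$ we have $gf\in\mathfrak m$, so $1-gf$ lies outside the maximal ideal and is therefore a unit of the local ring $R$; multiplying by its inverse yields $f^{t}\in\mathfrak m^{[p^e]}$. By the minimality built into the definition of $\mu_f(p^e)$ this forces $t\ge\mu_f(p^e)$, completing the proof.

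The argument is short, and the only genuine point is the observation that $1-gf$ is invertible — this is precisely where the hypotheses that $R$ is local and that $f\in\mathfrak m$ are used; everything else is formal bookkeeping with the definitions of $M_{e,t}$ and $\mu_f(p^e)$.
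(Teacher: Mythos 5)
Your proof is correct. The only direction with content is showing that $M_{e,t}=0$ forces $f^{t}\in\mathfrak m^{[p^e]}$, and here you take a genuinely different (if closely related) route from the paper. You pass directly from $f^{t}\in(f^{t+1})+\mathfrak m^{[p^e]}$ to $f^{t}(1-gf)\in\mathfrak m^{[p^e]}$ and invoke that $1-gf$ is a unit because $f\in\mathfrak m$ and $R$ is local --- the one-step Nakayama/determinant-trick argument. The paper instead uses the chain of surjections $M_{e,t}\twoheadrightarrow M_{e,t+1}\twoheadrightarrow\cdots$ established earlier: if $M_{e,t}=0$ then $f^{t}\overline R=f^{t+1}\overline R=\cdots=f^{p^e}\overline R$, and the last module vanishes because $f^{p^e}\in\mathfrak m^{[p^e]}$, which again gives $f^{t}\in\mathfrak m^{[p^e]}$. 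Both arguments ultimately rest on the same two hypotheses ($R$ local, $f\in\mathfrak m$); yours is self-contained and slightly more explicit about where locality enters, while the paper's reuses machinery it has already set up (the surjections $M_{e,t}\to M_{e,t+1}$) and avoids naming a unit. The easy direction ($t\ge\mu_f(p^e)$ implies $f^t,f^{t+1}\in\mathfrak m^{[p^e]}$, hence $M_{e,t}=0$) is identical in both treatments. No gaps.
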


\begin{proof}
If $M_{e,\, t}=0$, then 
$M_{e, \,t} = M_{e,\, t+1} = M_{e,\, t+2} = \cdots = M_{e,\, p^e} =0$. 
Hence, 
$f^t \in \mathfrak{m}^{[p^e]}$, 
and so 
$t \ge \mu_f(p^e)$. 
Conversely if 
$t \ge \mu_f(p^e)$, 
then 
$f^{t} \in \mathfrak{m}^{[p^e]}$ holds.
\end{proof}

We start to prove Theorem $\ref{th1}$.
The assertion $1)$ follows from Proposition \ref{lrlim}.
The assertion $2)$ follows from Corollary \ref{2.5}.
The first half of $3)$ follows from the definition of $C_{e,\, 0}$.
Now, we shall show $4)$. 

\begin{proof}
First, we check that 
\[
\inf\{ \alpha\in [0,1] \;|\; \xi_f(\alpha)=0 \} \le {\rm fpt}(f).
\] 
If ${\rm fpt}(f)=1$, then the assertion is easy. 
Assume ${\rm fpt}(f)<1$. 
Let $1>\alpha > {\rm fpt}(f)$. 
Since 
${\rm fpt}(f)=
\displaystyle\inf_{e\ge 0}\left\{\frac{\mu_f(p^e)}{p^e}\right\}$, 
\[
{\rm fpt}(f) \le \displaystyle\frac{\mu_f(p^{e_1})}{p^{e_1}} < \alpha
\]
holds for $e_1\gg 0$.
Then, it holds that 
\begin{eqnarray*}
\xi_f(\alpha) 
&\le& 
\xi_f\left(\displaystyle\frac{\mu_f(p^{e_1})}{p^{e_1}}\right) \\
&=&
\limsup_{e \to \infty} 
  C_{e,\big\lfloor \frac{\mu_f(p^{e_1})}{p^{e_1}}p^e\big\rfloor}\\
&=&
0
\end{eqnarray*}
because, by Lemma $\ref{mu}$,  
\[
C_{e_1 + s,\; \mu_f(p^{e_1})p^s} \le
C_{e_1 + s,\; \mu_f(p^{e_1+s})}=
0
\]
for any integers $s\ge 0$. 
Therefore, 
$\xi_f(\alpha)=0$ for all $\alpha > {\rm fpt}(f)$, as desired. 
Conversely, suppose $\alpha < {\rm fpt}(f)$. 
Hence, 
we have 
$({\rm fpt}(f)-\alpha) p^{e'} \ge 1$ for $e' \gg 0$, and therefore 
$\alpha p^{e'} \le {\rm fpt}(f)p^{e'} -1$. 
Then, since we have  
\[
\alpha \le 
\displaystyle\frac{{\rm fpt}(f)p^{e'}-1}{p^{e'}}
<
\frac{{\rm fpt}(f)p^{e'}}{p^{e'}}
= 
{\rm fpt}(f)
\le 
\displaystyle\frac{\mu_f(p^{e'})}{p^{e'}},
\]
we obtain 
\[
\alpha \le \displaystyle\frac{\mu_f(p^{e'})-1}{p^{e'}} .
\]
Therefore,  
\[
\xi_f(\alpha) 
\ge 
\xi_f\left(\displaystyle\frac{\mu_f(p^{e'})-1}{p^{e'}}\right) \\
\underset{\rm{by}\;\eqref{4}}{\ge}
\lim_{e \to \infty}
  C_{e,\big\lceil \frac{\mu_f(p^{e'})-1}{p^{e'}}p^e\big\rceil}\\
\]
holds. 
We have 
$C_{e',\, \mu_f(p^{e'})-1} \neq 0$ 
by Lemma $\ref{mu}$.  
Since 
$\left\{ C_{e,\,\big\lceil \frac{\mu_f(p^{e'})-1}{p^{e'}}p^e \big\rceil } \right\}_{e\ge 0}$ 
is an increasing sequence, we obtain
$
\displaystyle\lim_{e \to \infty} 
C_{e,\,\big\lceil \frac{\mu_f(p^{e'})-1}{p^{e'}}p^e \big\rceil }
> 0
$. 
Therefore, $\xi_f(\alpha)>0$ 
for all 
$\alpha$ 
such that $\alpha < {\rm fpt}(f)$, as desired. 
\end{proof}

Next, we shall show $5)$. 

\begin{proof}
Let 
$F=
\left\{ 
\alpha \in \left[ \frac{a}{p^e}, \frac{a+1}{p^e} \right] 
\;\Big|\; \text{$\alpha$ is a discontinuity for $\xi_f(x)$}
\right\} $ 
and 
$\Omega = \left[ \frac{a}{p^e}, \frac{a+1}{p^e} \right] - F$. 
Recall that $F$ is a countable set,
and $\displaystyle\lim_{e\to \infty}\xi_{f,e}(\alpha)=\xi_{f}(\alpha)$ for any $\alpha\in\Omega$ by Thorem $\ref{th1}$ $1)$, $2)$.
Then, we have 
\begin{eqnarray*}
\displaystyle\int_{\frac{a}{p^e}}^{\frac{a+1}{p^e}} \xi_f(x)dx
&=& \displaystyle\int_{\Omega} \xi_f(x)dx \\
&=& \displaystyle\int_{\Omega} \lim_{e\to \infty}\xi_{f,e}(x)dx \\
&=& \displaystyle\lim_{e\to \infty}\int_{\Omega} \xi_{f,e}(x)dx \\
&=& \displaystyle\lim_{e\to \infty}\int_{\frac{a}{p^e}}^{\frac{a+1}{p^e}} \xi_{f,e}(x)dx \\
&=& \displaystyle\frac{1}{p^e} C_{e,\, a}
\end{eqnarray*}
by Lebegue's dominated convergence theorem, as desired. 
\end{proof}


We shall show $6)$.

\begin{proof}
Let $g,h:{\mathbb N}\rightarrow {\mathbb R}$ be functions.
If there exists a positive constant $C$ such that
$|h(n)| \le C g(n)$ for $n \gg 0$, 
then we write $h(n)=O(g(n))$.
If $R/(f)$ is normal, then there exists $\beta (R/(f)) \in \mathbb{R}$ such that
\[
e_{HK}(R/(f))p^{ne} + \beta(R/(f))p^{(n-1)e}= \ell_{R}(M_{e,\, 0}) + O(p^{(n-2)e})
\]
by Huneke-McDermott-Monsky \cite{HMM04}.
Since a hypersurface is Gorenstein,
that $\beta (R/(f))=0$ follows from Corollary $1.4$ in Kurano \cite{K06}.
Therefore, we have 
\begin{equation}\label{eHKmulti}
e_{HK}(R/(f))p^{ne} = \ell_{R}(M_{e,\, 0}) + O(p^{(n-2)e}). 
\end{equation}
First, we shall show that
\[
\left|
\frac{\xi_{f}\left(\frac{1}{p^{s}}\right) - \xi_{f}(0)}{\frac{1}{p^{s}}}
\right|
\longrightarrow
0
\;\;(s \rightarrow \infty).
\]
Since the sequence $\left\{C_{s+i,\,p^{i}}\right\}_{i \ge 0}$ is increasing,
we have 
\[
\xi_{f} \left( \frac{1}{p^{s}} \right)
= \limsup_{e \to \infty}C_{e,\,\lfloor p^{e-s} \rfloor} \ge C_{s,\,1}.
\]
Hence, we obtain
\[
\left|
\frac{\xi_{f}\left(\frac{1}{p^{s}}\right) - \xi_{f}(0)}{\frac{1}{p^{s}}}
\right|
=
\frac{\xi_{f}(0) - \xi_{f}\left(\frac{1}{p^{s}}\right)}{\frac{1}{p^{s}}}
\le
\frac{\xi_{f}(0) - C_{s,\,1}}{\frac{1}{p^{s}}}.
\]
Set $\lambda_{i}(e)$ as $e_{HK}(R/(f)) p^{en} - \ell_{R}(M_{e,\, i})$
for each $e \ge 0$ and $0 \le i \le p-1$.
Note that
\[
0\le\lambda_{0}(e)\le\lambda_{1}(e)\le\cdots\le\lambda_{p-1}(e).
\]
Since we have,
\[
p\times \frac{\ell_{R}(M_{s-1,\,0})}{p^{(s-1)n}}
=
\frac{\ell_{R}(M_{s,\, 0})}{p^{sn}}
+ \frac{\ell_{R}(M_{s,\, 1})}{p^{sn}}
+\cdots
+ \frac{\ell_{R}(M_{s,\, p-1})}{p^{sn}}
\]
for any $s \ge 1$ by $\eqref{2}$, then we obtain
\[
p\times \frac{\lambda_{0}(s-1)}{p^{(s-1)n}}
=
\frac{\lambda_{0}(s)}{p^{sn}}
+ \frac{\lambda_{1}(s)}{p^{sn}}
+\cdots
+ \frac{\lambda_{p-1}(s)}{p^{sn}}.
\]
Hence, since
\[
p\times \frac{\lambda_{0}(s-1)}{p^{(s-1)n}}
\ge
\frac{\lambda_{1}(s)}{p^{sn}},
\]
it holds that
\[
p^{2} \times  \frac{\lambda_{0}(s-1)}{p^{(s-1)(n-1)}}
\ge
\frac{\lambda_{1}(s)}{p^{s(n-1)}}
\ge
0.
\]
Therefore,
\begin{eqnarray*}
\frac{\xi_{f}(0) - C_{s,\,1}}{\frac{1}{p^{s}}}
&=&
\frac{p^{s}}{p^{sn}}\left(e_{HK}(R/(f))p^{sn}-C_{s,\,1}\times p^{sn}\right)\\
&=&
\frac{\lambda_{1}(s)}{p^{s(n-1)}} \\
&\le&
p^{2} \times \frac{\lambda_{0}(s-1)}{p^{(s-1)(n-1)}} \\
&=&
\frac{p^{2}}{p^{s-1}}\times \frac{\lambda_{0}(s-1)}{p^{(s-1)(n-2)}}\\
&\rightarrow&
0 \;\;\; (e \to \infty)
\end{eqnarray*}
by the equation $\eqref{eHKmulti}$.
Consequently, for any positive real number $\varepsilon >0$,
there exists a nutural number $s_{0} \in {\mathbb N}$
such that $s \ge s_{0}$ implies that 
\[
\left|
\frac{\xi_{f}\left(\frac{1}{p^{s}}\right) - \xi_{f}(0)}{\frac{1}{p^{s}}}
\right|
<
\frac{\varepsilon}{p}.
\]
Let $\delta = \displaystyle\frac{1}{p^{s_{0}}}$.
If $0<x<\delta$,
then there exists $s \in {\mathbb N}$ such that 
\[
\frac{1}{p^{s+1}} < x < \frac{1}{p^{s}} \le \frac{1}{p^{s_{0}}}.
\]
Therefore, 
\begin{eqnarray*}
\left|
\frac{\xi_{f}(x) - \xi_{f}(0)}{x}
\right|
&=&
\frac{\xi_{f}(0) - \xi_{f}(x)}{x} \\
&\le&
\frac{\xi_{f}(0) - \xi_{f}\left(\frac{1}{p^{s}}\right)}{\frac{1}{p^{s+1}}}\\
&\le&
p \times \frac{\varepsilon}{p}\\
&=&
\varepsilon
\end{eqnarray*}
as desired.
\end{proof}


Finally, we shall prove the last half of $3)$. 

\begin{Definition}\label{matfac}
\begin{rm}
Let $(S,{\mathfrak n})$ be a $(d+1)$-dimensional regular local ring. 
Let $0 \neq \alpha \in {\mathfrak n}$. 
The pair $(\rho,\sigma)$ is called a $matrix$ $factorization$ of the element $\alpha$ if 
all of the following conditions are satisfied:
\end{rm}
\begin{enumerate}[$(1)$]
\item
$\rho: G\rightarrow F$ and $\sigma:F\rightarrow G$ are $S$-homomorphisms, 
where $F$ and $G$ are finitely generated $A$-free modules, 
and ${\rm rank}_S F= {\rm rank}_S G$. 

\item
$\rho \circ\sigma = \alpha \cdot id_F$. 

\item
$\sigma\circ \rho = \alpha \cdot id_G$. 
\end{enumerate}
Actually, if either $(2)$ or $(3)$ is satisfied, the other is satisfied. 
\end{Definition}

\begin{Definition}
\begin{rm}
Let $(S,{\mathfrak n})$ be a $(d+1)$-dimensional regular local ring, 
and let $0 \neq \alpha \in {\mathfrak n}$. 
Let $(\rho,\sigma)$ and $(\rho',\sigma')$ be matrix factorizations of $\alpha$. 
We regard 
$\rho$ and $\sigma$
as 
$r \times r$ 
matrixes with entries in $S$,
and $\rho'$ and $\sigma'$
as 
$r' \times r'$ 
matrixes with entries in $S$.
Then, we write 
\[
   (\rho,\sigma) \oplus (\rho',\sigma')
   =
   \left(
   \left( 
          \begin{array}{cc}
          \rho & 0 \\
          0 & \rho' 
          \end{array}
   \right), 
   \left( 
          \begin{array}{cc}
          \sigma & 0 \\
          0 & \sigma' 
          \end{array}
   \right)
   \right)
\]
which is a matrix factorization of $\alpha$. 
\end{rm}
\end{Definition}

\begin{Definition}
\begin{rm}
Let $(S,{\mathfrak n})$ be a $(d+1)$-dimensional regular local ring, 
and let $0 \neq \alpha \in {\mathfrak n}$. 
A matrix factorization
$(\rho,\sigma)$ of $\alpha$ is called $reduced$ 
if all the entries of $\rho$ and $\sigma$ are in $\mathfrak n$. 
\end{rm}
\end{Definition}

\begin{Remark}\label{MFrem}
\begin{rm}
Let $(S,{\mathfrak n})$ be a $(d+1)$-dimensional regular local ring, 
and let $0 \neq \alpha \in {\mathfrak n}$. 
Let the map 
$\alpha:S \rightarrow S$ 
be multiplication by 
$\alpha\in {\mathfrak n}$ on $S$. 
If $(\rho,\sigma)$ is a matrix factorization of $\alpha \in {\mathfrak n}$, 
then we can write
\[
   (\rho,\sigma)
   \simeq
   (\alpha, id_S)^{\oplus v} 
   \oplus (id_S, \alpha)^{\oplus u} 
   \oplus (\gamma_1, \gamma_2),
\]
where $v$ and $u$ are some integers, 
and $(\gamma_1, \gamma_2)$ is reduced. 
Therefore, 
\begin{eqnarray*}
  {\rm cok}(\rho) 
  &\simeq& {\rm cok}(\alpha)^{\oplus v} 
      \oplus {\rm cok}(id_S)^{\oplus u} \oplus {\rm cok}(\gamma_1) \\
  &\simeq& \left(S/(\alpha) \right)^{\oplus v} \oplus {\rm cok}(\gamma_1).
\end{eqnarray*}
It is known that 
${\rm cok}(\gamma_1)$ 
has no free direct summands 
if $(\gamma_1, \gamma_2)$ is reduced (\cite{E80}, Corollary $6.3$). 
Consequently, 
$v$ is equal to the largest rank of a free $S/(\alpha)$-module appearing as a direct summand of ${\rm cok}(\rho)$. 
\end{rm}
\end{Remark}

Let $F^e:R \rightarrow F^{e}_{*}R$ be the $e$-th Frobenius map. 
Consider the map $f: F^{e}_{*}R \rightarrow F^{e}_{*}R$.
We have $f=F^{e}_{*}(f^{p^{e}})=F^{e}_{*}(f)\cdot F^{e}_{*}(f^{p^{e}-1})=F^{e}_{*}(f^{p^{e}-1})\cdot F^{e}_{*}(f)$.
Therefore, $(F^{e}_{*}(f), F^{e}_{*}(f^{p^{e}-1}))$ is a matrix factorization.
We put
\[
(F^{e}_{*}(f), F^{e}_{*}(f^{p^{e}-1}))
=
(f, id_R)^{\oplus v_e} \oplus (id_R , f)^{\oplus u_e} \oplus (reduced).
\]
By Remark $\ref{MFrem}$ this implies that 
$v_e$ is the number of $R/(f)$ appearing as the direct summand of $\frac{F^{e}_{*}R}{F^{e}_{*}(f)(F^{e}_{*}R)}=F^{e}_{*}(R/(f))$. 
That is, 
${\displaystyle\lim_{e \to \infty}}\frac{v_e}{p^{en}}$ 
is the F-signature of $R/(f)$, 
denoted by $s(R/(f))$. 

\begin{Proposition}
$v_e = \ell_{R}(M_{e, \, p^e-1})$. 
\end{Proposition}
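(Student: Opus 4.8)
The plan is to read off $v_{e}$ from the matrix factorization $(F^{e}_{*}(f),F^{e}_{*}(f^{p^{e}-1}))$ by reducing it modulo $\mathfrak{m}$, and then to recognize the reduced second component as multiplication by $f^{p^{e}-1}$ on $R/\mathfrak{m}^{[p^{e}]}$, whose image is exactly $M_{e,\,p^{e}-1}$. Since $k$ is perfect, $F^{e}_{*}R$ is a free $R$-module, so $(F^{e}_{*}(f),F^{e}_{*}(f^{p^{e}-1}))$ is a matrix factorization of $f\in R$ in the sense of Definition~\ref{matfac}, and by Remark~\ref{MFrem} — which is precisely how $v_{e}$ was defined — we may write it as $(f,\mathrm{id}_{R})^{\oplus v_{e}}\oplus(\mathrm{id}_{R},f)^{\oplus u_{e}}\oplus(\gamma_{1},\gamma_{2})$ with $(\gamma_{1},\gamma_{2})$ reduced. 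Comparing second components, the map $\sigma:=F^{e}_{*}(f^{p^{e}-1})$ becomes, under $R$-linear isomorphisms of its source and target, the block map $\mathrm{id}_{R}^{\oplus v_{e}}\oplus(\text{multiplication by }f)^{\oplus u_{e}}\oplus\gamma_{2}$. Applying $(-)\otimes_{R}R/\mathfrak{m}$ (for maps between free modules this merely reduces the matrix entries), and using that $f\in\mathfrak{m}$ and that all entries of $\gamma_{2}$ lie in $\mathfrak{m}$, the reduction $\overline{\sigma}:=\sigma\otimes_{R}R/\mathfrak{m}$ becomes $\mathrm{id}_{k}^{\oplus v_{e}}\oplus 0$ in a suitable $k$-basis; as the isomorphisms above reduce to $k$-linear isomorphisms, this yields $\operatorname{rank}_{k}\overline{\sigma}=v_{e}$.

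For the remaining half I would identify $\overline{\sigma}$ explicitly. Because $F^{e}_{*}R$ is free, $F^{e}_{*}R\otimes_{R}R/\mathfrak{m}=F^{e}_{*}R/\mathfrak{m}F^{e}_{*}R=F^{e}_{*}(R/\mathfrak{m}^{[p^{e}]})=F^{e}_{*}(\overline{R})$ with $\overline{R}:=R/\mathfrak{m}^{[p^{e}]}$, and under this identification $\overline{\sigma}$ is $F^{e}_{*}$ of the multiplication map $f^{p^{e}-1}\colon\overline{R}\to\overline{R}$. Hence $\operatorname{Im}\overline{\sigma}=F^{e}_{*}(f^{p^{e}-1}\overline{R})$, and since $k$ is perfect the Frobenius push-forward of a $k$-vector space has the same $k$-dimension, so $\operatorname{rank}_{k}\overline{\sigma}=\dim_{k}(f^{p^{e}-1}\overline{R})$. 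Now $f^{p^{e}}\in\mathfrak{m}^{[p^{e}]}$: writing $f=\sum_{\lambda\neq 0}a_{\lambda}X^{\lambda}$, one has $f^{p^{e}}=\sum_{\lambda\neq 0}a_{\lambda}^{p^{e}}X^{p^{e}\lambda}$ in characteristic $p$, and each $X^{p^{e}\lambda}$ lies in $\mathfrak{m}^{[p^{e}]}$ because some coordinate of $p^{e}\lambda$ is at least $p^{e}$. Therefore $(f^{p^{e}})+\mathfrak{m}^{[p^{e}]}=\mathfrak{m}^{[p^{e}]}$, whence $f^{p^{e}-1}\overline{R}=\big((f^{p^{e}-1})+\mathfrak{m}^{[p^{e}]}\big)/\mathfrak{m}^{[p^{e}]}=M_{e,\,p^{e}-1}$, and $\dim_{k}M_{e,\,p^{e}-1}=\ell_{R}(M_{e,\,p^{e}-1})$ because $R$ has residue field $k$ and $M_{e,\,p^{e}-1}$ has finite length. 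Chaining the equalities gives $v_{e}=\operatorname{rank}_{k}\overline{\sigma}=\dim_{k}(f^{p^{e}-1}\overline{R})=\ell_{R}(M_{e,\,p^{e}-1})$.

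The part demanding the most care is identifying the reduced map $\overline{\sigma}$ with the concrete multiplication-by-$f^{p^{e}-1}$ map on $\overline{R}=F^{e}_{*}R/\mathfrak{m}F^{e}_{*}R$ and computing its $k$-rank correctly, in particular keeping track of where perfectness of $k$ is used — both in the freeness of $F^{e}_{*}R$ (so that the matrix factorization and its reduction make sense) and in the invariance of $k$-dimension under the Frobenius push-forward. The matrix-factorization bookkeeping from Remark~\ref{MFrem} and the elementary observation $f^{p^{e}}\in\mathfrak{m}^{[p^{e}]}$ are routine.
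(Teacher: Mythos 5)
Your proof is correct and follows essentially the same route as the paper: reduce the second component of the matrix factorization $(F^{e}_{*}(f),F^{e}_{*}(f^{p^{e}-1}))$ modulo $\mathfrak{m}$, read off the rank $v_{e}$ from the block decomposition of Remark~\ref{MFrem}, and identify the image of the reduced map with $F^{e}_{*}\bigl(((f^{p^{e}-1})+\mathfrak{m}^{[p^{e}]})/\mathfrak{m}^{[p^{e}]}\bigr)=F^{e}_{*}(M_{e,\,p^{e}-1})$. Your version is in fact slightly more careful than the paper's, spelling out that $f^{p^{e}}\in\mathfrak{m}^{[p^{e}]}$ and that perfectness of $k$ makes $\dim_{k}F^{e}_{*}(M)=\ell_{R}(M)$.
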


\proof
We can regard the map 
$F^{e}_{*}(f^{p^{e}-1}):F^{e}_{*}R \longrightarrow F^{e}_{*}R$ 
as a $p^{(n+1)e} \times p^{(n+1)e}$ matrix $A$ with entries in $R$;
\[
A = \left(
    \begin{array}{@{\,}ccc|ccc|ccc@{\,}}
     &        &   &   &        &    &   &        &    \\ 
      \;\hugesymbol{I_{v_e}}\;&   &   &   &        &    &   &        &    \\
      &        &  &   &        &    &   &        &    \\ \hline
      &        &   & f &        &    &   &        &    \\  
      &        &   &   & \ddots &    &   &        &    \\
      &        &   &   &        & f  &   &        &    \\ \hline
      &        &   &   &        &    &   &       &    \\
      &        &   &   &        &    & \,\;\hugesymbol{B}&       &    \\
      &        &   &   &        &    &   &        &    
    \end{array}
    \right)
    ,                   
\]
where $I_{v_e}$ is the identity matrix of size $v_{e}$, 
and $B$ is a matrix with entries in $\mathfrak m$. 
Therefore, 
we have 
\[
v_e
=
{\rm dim}_{R/{\mathfrak m}} \Big({\rm Im}\big(R/{\mathfrak m} \otimes F^{e}_{*}(f^{p^{e}-1}) \big) \Big)
=
{\rm dim}_{R/{\mathfrak m}} 
F^{e}_{*}
\left( 
       \frac{(f^{p^e-1})+ {\mathfrak m}^{[p^{e}]}}{{\mathfrak m}^{[p^{e}]}}
\right)
=
M_{e, \, p^{e}-1}.
\]
\qed

We completed a proof of Theorem $\ref{th1}$.

\begin{Remark}
\begin{rm}
Let $(S, \mathfrak{n}, k)$ be a complete regular local ring of characteristic $p>0$. Suppose that $k$ is perfect. 
Let $I$ be a ideal of $S$, and put $\overline{S}=S/I$.
Suppose that $a_e$ is equal to the largest rank of a free $\overline{S}$-module appearing in a direct summand of $F^e_*\overline{S}$.
Then it is known that
\[
a_e = 
\dim_k \frac{(I^{[p^e]} : I) + {\mathfrak m}^{[p^e]}}
{{\mathfrak m}^{[p^e]}}
\]
by Fedder's lemma (see \cite{AE05}).
If $I=(f)$, then
\[
a_e = 
\dim_k \frac{(f^{p^{e}-1}) + {\mathfrak m}^{[p^e]}}
{{\mathfrak m}^{[p^e]}}.
\]
\end{rm}
\end{Remark}

\section{Examples}\label{Example} 

 Let $f=X^{\alpha_1}_1 X^{\alpha_2}_2 \cdots X^{\alpha_{n+1}}_{n+1}$ and 
 $\alpha_1 \le \alpha_2 \le \dots \le \alpha_{n+1}$. 
 We set 
 $(\underline{X}^{p^e})=(X^{p^e}_1,\, X^{p^e}_2, \dots ,X^{p^e}_{n+1})$ 
 for $e\ge0$. 
 We have an exact sequence
 \[
    0 \longrightarrow 
    M_{e,\, t}
    \longrightarrow \displaystyle\frac{R}
    {(f^{t+1})+(\underline{X}^{p^e})}
    \longrightarrow \frac{R}
    {(f^t)+(\underline{X}^{p^e})}
    \longrightarrow 0
 \]
 for any $t \ge 0$.
 On the other hand, we have an exact sequence
 \[
    0 \longrightarrow 
    \displaystyle\frac{(f^t)+(\underline{X}^{p^e})}
     {(\underline{X}^{p^e})} 
    \longrightarrow \frac{R}{(\underline{X}^{p^e})}
    \longrightarrow \frac{R}
    {(f^t)+(\underline{X}^{p^e})}
    \longrightarrow 0
 \]
 for any $t \ge 0$. 
 Hence, 
 \[
    \ell_R\left( \displaystyle\frac{R}{(f^t)+(\underline{X}^{p^e})} \right)
    =
    \begin{cases}
    p^{e(n+1)} - \displaystyle\prod_{j=1}^{n+1} (p^e - t\alpha_j) & 
    (\text{if $t\alpha_{n+1} < p^e$}) \\
    
    p^{e(n+1)} & (\text{otherwise})
    \end{cases}
 \]
 holds. 
 Therefore, we have 
 \begin{equation}\label{ex}
    \ell_R(M_{e,\, t})
    =
    \begin{cases} \medskip
    0 & \left(\displaystyle\frac{p^e}{\alpha_{n+1}} \le t \right) \\
    \smallskip
    \displaystyle\prod_{j=1}^{n+1} (p^e - t\alpha_j) 
    & \left(\displaystyle\frac{p^e}{\alpha_{n+1}}-1 
      \le t < \displaystyle\frac{p^e}{\alpha_{n+1}} \right) \\
    
    \displaystyle\prod_{j=1}^{n+1} (p^e - t\alpha_j)-
                              \prod_{j=1}^{n+1} (p^e - (t+1)\alpha_j)
     &  \left(t< \displaystyle\frac{p^e}{\alpha_{n+1}}-1 \right) \\
    \end{cases}.
 \end{equation}
 If $t< \displaystyle\frac{p^e}{\alpha_{n+1}}-1$, 
 \begin{eqnarray*}
     \ell_R(M_{e,\, t}) &=& \prod_{j=1}^{n+1} (p^e - t\alpha_j)-
                              \prod_{j=1}^{n+1} (p^e - (t+1)\alpha_j)\\
                      &=& \sum_{j=1}^{n+1} (-1)^{j} t^j \beta_j p^{e(n+1-j)} - 
                          \sum_{j=1}^{n+1} (-1)^{j} (t+1)^j \beta_j p^{e(n+1-j)}\\
                      &=& \sum_{j=1}^{n+1} (-1)^{j+1} 
                            \left(\sum_{i=0}^{j-1} \dbinom{j}{i}t^i \right)
                            \beta_{j} p^{e(n+1-j)}, 
 \end{eqnarray*}
 where $\beta_j$ 
 denotes the elementary symmetric polynomial of degree $j$ 
 in $\alpha_1,\, \alpha_2, \dots ,\, \alpha_{n+1}$.
 Hence
 \[
       C_{e,\, t} = \displaystyle\frac{\ell_{R}(M_{e,\, t})}{p^{en}} 
                  = \sum_{j=1}^{n+1} (-1)^{j+1} 
                            \left(\sum_{i=0}^{j-1} 
                              \dbinom{j}{i}\frac{t^i}{p^{e(j-1)}} \right)
                            \beta_{j} 
 \]
 holds. 
 We shall calculate $\xi_f(x)$. 
 If $x< \displaystyle\frac{1}{\alpha_{n+1}}$, 
 then $\lfloor xp^e \rfloor < \displaystyle\frac{p^e}{\alpha_{n+1}} -1$ 
 for $e \gg 0$.
 Then,
  \[
    \displaystyle C_{e,\, \lfloor xp^e \rfloor }
    =
    \sum_{j=1}^{n+1} (-1)^{j+1} 
                            \left(\sum_{i=0}^{j-1}
                            \displaystyle
                              \dbinom{j}{i}
                              \frac{\lfloor xp^e \rfloor^i}
                                    {p^{e(j-1)}} \right)
                            \beta_{j} .
 \]
 Since 
 $xp^e-1 \le \lfloor xp^e \rfloor \le xp^e$,
 we have
 \[
      \displaystyle\lim_{e\to\infty} \frac{\lfloor xp^e \rfloor ^a}{p^{eb}} 
    =
    \begin{cases}
         x^{a}  & (\text{if $a=b$})\\
         0  &  (\text{if $a<b$})
    \end{cases}.
 \]
 Consequently, 
 \begin{eqnarray}\label{9}
    \xi_f(x)
     =
     \beta_1 -2\beta_2 x +3\beta_3 x^2 
     - \cdots +(-1)^{n}(n+1)\beta_{n+1} x^n
 \end{eqnarray}
 holds for $0\le x< \displaystyle\frac{1}{\alpha_{n+1}}$. 
 In particular, 
 $e_{HK}(R/(f)) = \xi_{f}(0) = \alpha_1 + \alpha_2 + \cdots + \alpha_{n+1}$. 
 By $\eqref{ex}$, we have
 \begin{eqnarray}\label{10}
 \xi_{f}(x)=0
 \end{eqnarray}
 if $x> \displaystyle\frac{1}{\alpha_{{n+1}}}$.

 Next, we shall calulate 
 $\xi_f\left(\displaystyle\frac{1}{\alpha_{n+1}}\right)$.
 Since
 $\displaystyle\frac{p^e}{\alpha_{n+1}}-1
 \le
 \left\lfloor\displaystyle\frac{p^e}{\alpha_{n+1}}\right\rfloor
 \le\displaystyle\frac{p^e}{\alpha_{n+1}}$ 
 for any $e \ge 0$,

 \begin{eqnarray*}
 \ell (M_{e, \, \big\lfloor  \frac{1}{\alpha_{n+1}}p^e \big\rfloor}) 
 &=& \displaystyle\prod_{j=1}^{n+1} 
      \left\{ 
             p^e - 
             \bigg\lfloor \frac{p^e}{\alpha_{n+1}} \bigg\rfloor 
             \alpha_{j}
      \right\} \\
  &=&  \varepsilon_e \displaystyle\prod_{j=1}^{n} 
      \left\{
             p^e - 
             (p^e- \varepsilon_e)\frac{\alpha_{j}}{\alpha_{n+1}}
      \right\} \\
  &=&  \varepsilon_e \displaystyle\prod_{j=1}^{n} 
      \left\{
             \left(1-\frac{\alpha_{j}}{\alpha_{n+1}} \right)p^e
              +  \frac{\varepsilon_e}{\alpha_{n+1}}
              \alpha_{j}
      \right\} \\
  &=&  \varepsilon_e 
       \displaystyle\left(\frac{1}{\alpha_{n+1}}\right)^n
       \prod_{j=1}^{n} 
       \big\{
             (\alpha_{n+1}-\alpha_{j})p^e
              + \varepsilon_e
              \alpha_{j}
      \big\} \\
  &=&  \varepsilon_e 
       \displaystyle\left(\frac{1}{\alpha_{n+1}}\right)^n
       \left\{  
               p^{en}\displaystyle\prod_{j=1}^{n}(\alpha_{n+1}-\alpha_{j})
               +
               \sum_{k=1}^{n}
               \sum_{1\le i_1< i_2 < \cdots < i_k \le n}
               \delta_{\underline{i}}\,
               p^{e(n-k)}\varepsilon^k_e 
               \alpha_{i_1}\alpha_{i_2}\cdots\alpha_{i_k}
        \right\} \\
  &=&  \varepsilon_e 
       \displaystyle\left(\frac{1}{\alpha_{n+1}}\right)^n
       p^{en}
       \left\{  
               \displaystyle\prod_{j=1}^n(\alpha_{n+1}-\alpha_j)
               +
               \sum_{k=1}^n  \delta_k  
               \left(\frac{\varepsilon_e}{p^e} \right)^{k}
        \right\}, 
 \end{eqnarray*}
 where $\varepsilon_e \equiv  p^e \pmod{\alpha_{n+1}}$ 
 such that $0 \le \varepsilon_e <\alpha_{n+1}$, 
 and 
\begin{eqnarray*}
\delta_{\underline{i}}
&=&
   \prod_{j \neq i_1 , i_2 , \dots , i_k} (\alpha_{n+1}-\alpha_{j}),\\
\delta_{k}
&=&
    \sum_{1\le i_1< i_2 < \cdots < i_k \le n}
               \delta_{\underline{i}}\,
               \alpha_{i_1}\alpha_{i_2}\cdots\alpha_{i_k}.
  \end{eqnarray*}
Hence, 
\[
     C_{e, \,  \big\lfloor  \frac{1}{\alpha_{n+1}} p^e \big\rfloor}
     =
     \varepsilon_e 
       \displaystyle\left(\frac{1}{\alpha_{n+1}}\right)^n
       \left\{  
               \displaystyle\prod_{j=1}^n(\alpha_{n+1}-\alpha_j)
               +
               \sum_{k=1}^n
               \delta_{k}
               \left(\frac{\varepsilon_e}{p^e} \right)^{k}
        \right\} ,
\]
and therefore
\begin{equation}\label{supC}
    \displaystyle
     \limsup_{e\to\infty}
     C_{e, \,  \big\lfloor  \frac{1}{\alpha_{n+1}} p^e \big\rfloor}
     =
     \left(\limsup_{e\to\infty}\varepsilon_e \right)
       \displaystyle\left(\frac{1}{\alpha_{n+1}}\right)^n
               \displaystyle\prod_{j=1}^n(\alpha_{n+1}-\alpha_j).
\end{equation}

We shall examine whether
$\displaystyle\lim_{e\to\infty}\varepsilon_{e}$
exists.
Let $\alpha_{n+1} = p^{s}q$,
where $q$ is coprime to $p$, and $s$ is a non-negative integer.
If $p \equiv 1 \pmod{q}$,
then we can find that
$\varepsilon_{e}$ is constant for any $e \ge s$
by the Chinese remainder theorem.
If $p \not\equiv 1 \pmod{q}$,
then $\varepsilon_{e}$ is eventually periodic with period more than $1$.

 From the following Proposition $\ref{form}$, 
 we get to know the function $\xi_f(x)$.

 \begin{Proposition}\label{form}
    \begin{rm}
    Let $f=X^{\alpha_1}_1 X^{\alpha_2}_2 \cdots X^{\alpha_{n+1}}_{n+1}$ with 
 $\alpha_1 \le \alpha_2 \le \dots \le \alpha_{n+1}$.  
    \begin{enumerate}[1)]
    
      \item
      We have ${\rm fpt}(f)= \displaystyle\frac{1}{\alpha_{n+1}}$. 
      If $\alpha_{n+1} \ge 2$, we have
      $s\big(R/(f) \big) =0$.
      
      \item
      $\displaystyle\lim_{ x \to \frac{1}{\alpha_{n+1}} -0 }\xi_f(x)
       = \displaystyle\left(\frac{1}{\alpha_{n+1}}\right)^{n-1}
           \prod_{j=1}^n (\alpha_{n+1} - \alpha_j) \ge 0$. 
      
      \item
      The function $\xi_f(x)$ is continuous on $[0,1]$ if and only if 
      $\alpha_{n+1}=\alpha_{n}$ holds.
      
      \item
      Let $\alpha_{n+1} = p^{s}q$,
      where $q$ is coprime to $p$, and $s$ is a non-negative integer.
      The limit
      $\displaystyle\lim_{e\to\infty}\xi_{f,e}\left(\frac{1}{\alpha_{n+1}}\right)$
      exists if and only if
      it satisfies that $\alpha_{n+1}=\alpha_{n}$ or $p \equiv 1 \pmod{q}$.
    \end{enumerate}
    \end{rm}
 \end{Proposition}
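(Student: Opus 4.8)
The proposition is almost entirely a matter of organizing the explicit computations assembled just before its statement. The plan is to treat $x_{0}:=1/\alpha_{n+1}$ as the only interesting point: by \eqref{9} the function $\xi_{f}$ agrees on $[0,x_{0})$ with the polynomial $\beta_{1}-2\beta_{2}x+3\beta_{3}x^{2}-\cdots+(-1)^{n}(n+1)\beta_{n+1}x^{n}$, hence is continuous there with all one-sided limits existing, and by \eqref{10} it vanishes identically on $(x_{0},1]$; so everything reduces to what happens at $x_{0}$, together with the length formula \eqref{ex}, the displayed closed form for $C_{e,\lfloor p^{e}/\alpha_{n+1}\rfloor}$ in terms of $\varepsilon_{e}$, and \eqref{supC}.

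For 1), I would read $\mu_{f}(p^{e})$ off \eqref{ex}: since $\ell_{R}(M_{e,t})=0$ exactly when $t\ge p^{e}/\alpha_{n+1}$, Lemma \ref{mu} gives $\mu_{f}(p^{e})=\lceil p^{e}/\alpha_{n+1}\rceil$, so ${\rm fpt}(f)=\lim_{e}\mu_{f}(p^{e})/p^{e}=1/\alpha_{n+1}$. If $\alpha_{n+1}\ge 2$ then $1>x_{0}$, so \eqref{10} gives $\xi_{f}(1)=0$, whence $s(R/(f))=\xi_{f}(1)=0$ by Theorem \ref{th1} 3). For 2), the key observation is that the polynomial in \eqref{9} is $-g'(x)$, where $g(x)=\prod_{j=1}^{n+1}(1-\alpha_{j}x)=\sum_{j=0}^{n+1}(-1)^{j}\beta_{j}x^{j}$; hence $\lim_{x\to x_{0}-0}\xi_{f}(x)=-g'(x_{0})=\sum_{i=1}^{n+1}\alpha_{i}\prod_{j\neq i}(1-\alpha_{j}x_{0})$, in which every summand with $i\neq n+1$ vanishes because it contains the factor $1-\alpha_{n+1}x_{0}=0$. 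Only the $i=n+1$ term survives, giving $\alpha_{n+1}\prod_{j=1}^{n}(1-\alpha_{j}/\alpha_{n+1})=(1/\alpha_{n+1})^{n-1}\prod_{j=1}^{n}(\alpha_{n+1}-\alpha_{j})$, which is $\ge 0$ because $\alpha_{j}\le\alpha_{n+1}$ for $j\le n$.

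For 3), $\xi_{f}$ is continuous on $[0,1]$ except possibly at $x_{0}$. If $\alpha_{n}<\alpha_{n+1}$ the left limit computed in 2) is strictly positive while the right limit is $0$, so $\xi_{f}$ is discontinuous at $x_{0}$; if $\alpha_{n}=\alpha_{n+1}$ that product vanishes, both one-sided limits at $x_{0}$ equal $0$, and monotonicity of $\xi_{f}$ forces $\xi_{f}(x_{0})=0$, so $\xi_{f}$ is continuous (in the degenerate case $\alpha_{n+1}=1$ one has $x_{0}=1$ and $\alpha_{n}=\alpha_{n+1}$, and continuity at $1$ is already part of Theorem \ref{th1} 1)). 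For 4), take $\alpha_{n+1}\ge 2$, so $\xi_{f,e}(x_{0})=C_{e,\lfloor p^{e}/\alpha_{n+1}\rfloor}$ (for $\alpha_{n+1}=1$, \eqref{ex} gives instead $\xi_{f,e}(1)=C_{e,p^{e}-1}=p^{-en}\to 0$, and there $\alpha_{n}=\alpha_{n+1}$). From the displayed closed form preceding \eqref{supC} one gets $C_{e,\lfloor p^{e}/\alpha_{n+1}\rfloor}=c\,\varepsilon_{e}+o(1)$ with $c=(1/\alpha_{n+1})^{n}\prod_{j=1}^{n}(\alpha_{n+1}-\alpha_{j})\ge 0$, since $\varepsilon_{e}$ is bounded and $\varepsilon_{e}/p^{e}\to 0$. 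If $\alpha_{n}=\alpha_{n+1}$ then $c=0$ and the limit is $0$. If $\alpha_{n}<\alpha_{n+1}$ then $c>0$, so $\lim_{e}\xi_{f,e}(x_{0})$ exists iff $\{\varepsilon_{e}\}$ converges; writing $\alpha_{n+1}=p^{s}q$ with $\gcd(p,q)=1$, the Chinese remainder theorem shows that for $e\ge s$ the residue $\varepsilon_{e}$ is determined by $p^{e}\bmod q$, whose values are periodic of period ${\rm ord}_{q}(p)$, so $\{\varepsilon_{e}\}$ is eventually constant iff ${\rm ord}_{q}(p)=1$, i.e.\ $p\equiv 1\pmod q$. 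Combining the two cases yields the stated equivalence.

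Almost all of the computational burden was already discharged in the paragraph preceding the proposition, so I expect no serious obstacle. The only genuine content is the algebraic identity $\xi_{f}=-g'$ on $[0,x_{0})$ and the resulting collapse of $g'(x_{0})$ to a single term — this is what makes 2), and through it 3) and 4), go through; the main practical risk is off-by-one slips in the floor/ceiling arithmetic for $\mu_{f}(p^{e})$ and $\varepsilon_{e}$, and remembering to separate out the degenerate case $\alpha_{n+1}=1$, where $x_{0}=1$ and the second branch of the definition of $\xi_{f,e}$ applies.
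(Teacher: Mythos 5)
Your proposal is correct and follows essentially the same route as the paper: part 2) via the identity $\xi_f(x)=-\bigl(\prod_{j}(1-\alpha_j x)\bigr)'$ on $[0,1/\alpha_{n+1})$ (your $g$ is literally the paper's $x^{n+1}h(1/x)$, so the collapse to the single surviving factor is the same computation), and parts 1), 3), 4) read off from $\eqref{ex}$, $\eqref{9}$, $\eqref{10}$, $\eqref{supC}$ and the periodicity discussion of $\varepsilon_e$, which the paper leaves as ``immediate'' and you merely flesh out. The only caveat is the degenerate case $\alpha_{n+1}=1$ with some $\alpha_j=0$, where your claim that $\alpha_{n+1}=1$ forces $\alpha_n=\alpha_{n+1}$ fails; but this edge case is glossed over by the paper itself and does not affect the substance of the argument.
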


\proof
\begin{rm}
By $\eqref{9}$ and $\eqref{10}$, we obtain $1)$ immeiately.

Next we shall prove $2)$.
We set 
\[
g(x)=\beta_1 -2\beta_2 x +3\beta_3 x^2 
      - \cdots +(-1)^{n}(n+1)\beta_{n+1} x^n
\]
and
\[
h(x) = (x - \alpha_{1})(x - \alpha_{2})\cdots (x-\alpha_{n+1}).
\]
Now, since 
$h(x) = x^{n+1} - \beta_{1}x^{n} +\beta_{2}x^{n-1} - \cdots +(-1)^{n+1}\beta_{n+1}$, 
\[
x^{n+1} h\left( \displaystyle\frac{1}{x} \right) 
  =  1- \beta_{1}x + \beta_{2}x^2 - \cdots +(-1)^{n+1}\beta_{n+1} x^{n+1}.
\]
Hence, we have the following equation
\[
g(x) 
=
-\left\{ x^{n+1}h\left( \frac{1}{x} \right) \right\}'
=
 -(n+1) x^n h\left( \displaystyle\frac{1}{x} \right) 
          + x^{n-1} h' \left( \displaystyle\frac{1}{x} \right) .
\]
Since $h(\alpha_{n+1})=0$, 
\begin{eqnarray*}
\displaystyle\lim_{ x \to \frac{1}{\alpha_{n+1}} -0 }\xi_f(x)
 &=& g\left(\frac{1}{\alpha_{n+1}}\right) \\
 &=& \left(\frac{1}{\alpha_{n+1}}\right)^{n-1} h'(\alpha_{n+1}) \\
 &=& \left( \frac{1}{\alpha_{n+1}} \right)^{n-1} 
                \prod_{j=1}^{n} (\alpha_{n+1} - \alpha_{j}) \ge 0 .
\end{eqnarray*}
The assertion $3)$ follows from $\eqref{ex}$, $\eqref{9}$ and $2)$ as above.
The assertion $4)$ follows from the equation $\eqref{supC}$.
\end{rm}
\qed

\begin{Example}
\begin{rm}
If 
$\alpha_1 = \alpha_2 = \cdots =\alpha_{n-2}=0$ 
and $\alpha_{n-1}\neq 0$, 
the derivative 
\[
g'(x)=
-2(\alpha_{n+1}\alpha_{n}+\alpha_{n+1}\alpha_{n-1}+\alpha_{n}\alpha_{n-1})
+6\alpha_{n+1}\alpha_{n}\alpha_{n-1}x.
\]
Let $\alpha$ be the root of $g'(x)=0$, that is,
\[
\alpha = \displaystyle\frac{1}{3}
\times\frac{\alpha_{n+1}\alpha_{n}+\alpha_{n+1}\alpha_{n-1}+\alpha_{n}\alpha_{n-1}}{\alpha_{n+1}\alpha_{n}\alpha_{n-1}}. 
\]
Then, we have
\[
\alpha- \displaystyle\frac{1}{\alpha_{n+1}}
=
\frac{1}{\alpha_{n+1}}
\left\{
       \frac{1}{3}\left(
       \frac{\alpha_{n+1}}{\alpha_{n-1}}
       +\frac{\alpha_{n+1}}{\alpha_{n}}
       +1
       \right)
       -1
\right\}
\ge
0,
\]
and so $g'(x)< 0$ for any $x<\displaystyle\frac{1}{\alpha_{n+1}}$. 
Moreover, 
if $\alpha_{n+1}\neq \alpha_{n}$ 
we obtain 
$g'\left(\displaystyle\frac{1}{\alpha_{n+1}}\right) < 0$. 
The second derivative 
$g''(x)$ 
is positive for any $x\in {\mathbb R}$. 
In fact, 
$g''(x)=6\alpha_{n-1}\alpha_{n}\alpha_{n+1} > 0$. 
\end{rm}
\end{Example}

\section*{Acknowledgements}
This paper was written while the author was a student in a doctor's course of Meiji University.
I would like to express my deepest gratitude to my advisor Prof. Kurano who provided helpful comments and suggestions.
I would also like to thank Yuji Kamoi, Akiyoshi Sannai and Kei-ichi Watanabe who gave me invaluable comments and warm encouragements.

\end{document}